\theoremstyle{plain} 
\newtheorem{theorem}{Theorem} 
\newtheorem{lemma}[theorem]{Lemma}
\newtheorem{remark}[theorem]{Remark} 
\newtheorem{definition}[theorem]{Definition}
\newcommand{\br}{\mathbf r}
\newcommand{\bnu}{{\boldsymbol \nu}}
\definecolor{sps}{rgb}{0.2,0.7,0.2}
\newcommand{\cone}{1}
\DeclareMathOperator{\mre}{Re}
\DeclareMathOperator{\ess}{ess}
\begin{document} 
\title[Infinitely many embedded eigenvalues for the Neumann-Poincar\'e operator in 3D]{Infinitely many embedded eigenvalues\\ for the Neumann-Poincar\'e operator in 3D}
\date{\today} 

\author{Wei Li} 
\address{Department of Mathematical Sciences,
	DePaul University, Chicago, IL, USA} 
\email{wei.li@depaul.edu}
\author{\,Karl-Mikael Perfekt}
\address{Department of Mathematical Sciences, Norwegian University of Science and Technology (NTNU), NO-7491 Trondheim, Norway}
\email{karl-mikael.perfekt@ntnu.no}
\author{\,Stephen P. Shipman} 
\address{Department of Mathematics
	Louisiana State University, Baton Rouge, LA, USA} 
\email{shipman@lsu.edu}

\subjclass[2010]{31B10, 45A05, 45C05, 45E05, 45P05}

\begin{abstract}
This article constructs a surface whose Neumann-Poincar\'e (NP) integral operator has infinitely many eigenvalues embedded in its essential spectrum. The surface is a sphere perturbed by smoothly attaching a conical singularity, which imparts essential spectrum.
Rotational symmetry allows a decomposition of the operator into Fourier components.
Eigenvalues of infinitely many Fourier components are constructed so that they lie within the essential spectrum of other Fourier components and thus within the essential spectrum of the full NP operator.
The proof requires the perturbation to be sufficiently small, with controlled curvature, and the conical singularity to be sufficiently flat.
\end{abstract}
\maketitle

\section{Introduction}

Let $\Gamma \subset \mathbb R^3$ be a connected Lipschitz surface with surface measure $d\sigma$, enclosing a bounded open domain. The adjoint of the Neumann-Poincar\'e operator is defined by
\begin{equation*}
K_{\Gamma} f(\br) = \frac{1}{2\pi}\int_\Gamma K_{\Gamma}(\br,\br') f(\br') \, d\sigma(\br'), \quad \br \in \Gamma,
\end{equation*}
where the kernel is
\begin{equation}\label{eq:kerK}
K_{\Gamma}(\br,\br') = \frac{\langle \br - \br', \bnu_{\br} \rangle}{|\br - \br'|^3}.
\end{equation}
The adjoint operator $K_\Gamma^\ast$ is the direct (principal) value of the double-layer potential on $\Gamma$.
The single-layer operator is defined~by
\begin{equation*}
S_{\Gamma} f(\br) = \frac{1}{2\pi} \int_\Gamma S_{\Gamma}(\br,\br') f(\br') \, d\sigma(\br'), \quad \br \in \Gamma,
\end{equation*}
where the kernel is
\begin{equation*}
S_{\Gamma}(\br,\br') =\frac{1}{|\br - \br'|}.
\end{equation*}

While the kernel $K_{\Gamma}(\br,\br')$ is symmetric only if $\Gamma$ is a sphere, there is a natural framework that uncovers the intrinsic symmetry of the operator $K_\Gamma$ \cite{KhavinsonPutinarShapiro2007, PerfektPutinar2014}.
Consider the inner product
\begin{equation*}
\langle f, g \rangle_{S_{\Gamma}}  := \langle f, S_{\Gamma} g \rangle_{L^2(\Gamma)},
\end{equation*}
with corresponding norm $\|f\|_{S_{\Gamma}} = \sqrt{\langle f, f \rangle_{S_{\Gamma}}}$\,. 
The energy space $\mathcal{E} = \mathcal{E}(\Gamma)$ is the completion of $L^2(\Gamma)$ under the $\|\!\cdot\!\|_{S_{\Gamma}}$-norm. Then $K_\Gamma \colon \mathcal{E} \to \mathcal{E}$ is self-adjoint. For a fixed surface $\Gamma$, the space $\mathcal{E}$ and the Sobolev space $H^{-1/2}(\Gamma)$ consist of the same distributions and the norms $\|\!\cdot\!\|_{S_{\Gamma}}$ and $\|\!\cdot\!\|_{H^{-1/2}}$ are equivalent.

The main goal of the present work is to provide a surface $\Gamma$ with a conical point such that $K_\Gamma \colon \mathcal{E} \to \mathcal{E}$ has infinitely many eigenvalues embedded in the essential spectrum arising from the conical singularity. An equivalent formulation is that the Neumann--Poincar\'e operator $K_\Gamma^\ast \colon \mathcal{E}' \to \mathcal{E}'$ has infinitely many eigenvalues embedded in its essential spectrum, where $\mathcal{E}' \simeq H^{1/2}(\Gamma)$ is the natural dual space of $\mathcal{E}$. The surface will be constructed by smoothly adding a conical singularity to a sphere in such a way that rotational symmetry is preserved.

For the NP operator on certain reflectionally symmetric closed curves in $\mathbb{R}^2$ with a corner, it was proved in~\cite{LiShipman2019} that finitely many eigenvalues are embedded in the essential spectrum.  Numerical evidence of embedded eigenvalues and complex resonances had previously been demonstrated in~\cite{HelsingKangLim2016}, and numerical analysis of these phenomena appeared in \cite{Bonnet-BenHazardMonteghett2020} as applied to surface plasmons on subwavelength particles.  In 2D, the essential spectrum produced by a corner is an angle-dependent interval of absolutely continuous spectrum of multiplicity 1 \cite{BonnetierZhang2017, KangLimYu2017, Perfekt2020,PerfektPutinar2017}. Reflectional symmetry additionally induces a decomposition of the operator into even and odd components; and for the curves featured in~\cite{LiShipman2019}, non-embedded eigenvalues of one component are embedded in the continuous spectrum of the other component.

The main result of the present work is the following.

\begin{theorem}\label{thm:infemb}
Let $\Gamma_0$ be the unit sphere in $\mathbb{R}^3$. There exists a conical perturbation $\Gamma$ of $\Gamma_0$ such that $K_{\Gamma} \colon \mathcal{E} \to \mathcal{E}$ has infinitely many eigenvalues within its essential spectrum.
\end{theorem}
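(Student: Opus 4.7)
The plan is to construct $\Gamma$ as an axisymmetric $C^\infty$ perturbation of $\Gamma_0$ that agrees with the sphere outside a small neighborhood of the south pole and ends there in a flat conical tip of half-angle $\alpha$ close to $\pi/2$. Rotational symmetry about this axis then yields the orthogonal Fourier decomposition
\[
K_\Gamma \;=\; \bigoplus_{m\in\mathbb{Z}} K_\Gamma^{(m)},
\]
where $K_\Gamma^{(m)}$ is the restriction of $K_\Gamma$ to functions of the form $e^{im\phi}f(s)$ on the generating curve. Each $K_\Gamma^{(m)}$ is self-adjoint in the inner product inherited from $S_\Gamma$, and $\sigma_{\ess}(K_\Gamma)=\overline{\bigcup_m \sigma_{\ess}(K_\Gamma^{(m)})}$.

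First, I would identify $\sigma_{\ess}(K_\Gamma^{(m)})=[-\beta_m(\alpha),\beta_m(\alpha)]$ for each $m$. Off the conical point the surface is smooth, so standard Fredholm arguments reduce $\sigma_{\ess}(K_\Gamma^{(m)})$ to the spectrum of a dilation-invariant model operator on the tangent cone, which is computable by a Mellin transform. Two qualitative features must be extracted: (a) $\beta_0(\alpha)\ge c(\alpha)>0$ for any fixed non-degenerate cone, so $\sigma_{\ess}(K_\Gamma)$ contains a fixed interval $[-c(\alpha),c(\alpha)]$; and (b) $\beta_m(\alpha)\to 0$ as $|m|\to\infty$, rapidly enough that $\beta_m(\alpha)<1/(2|m|+1)$ for all sufficiently large $|m|$.

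Next, I would track the unperturbed sphere eigenvalues through the cone-attaching deformation. On $\Gamma_0$, $K_{\Gamma_0}^{(m)}$ has eigenvalues $\{1/(2n+1): n\ge|m|\}$, with associated Legendre function eigenfunctions; the largest of these for fixed $m$ is $1/(2|m|+1)$. The deformation is genuinely singular at the south pole, so classical norm perturbation is unavailable; instead, a two-scale argument is natural. Away from the tip the generating arcs can be taken $C^k$-close, making the corresponding portions of the operator norm-close on spectral subspaces bounded away from the essential spectrum; near the tip the cone's flatness ($\alpha$ near $\pi/2$) makes the local contribution small, uniformly in $m$. Combining these via a resolvent identity and generalized convergence of operators, one concludes that for a sufficiently flat cone and sufficiently small smooth perturbation, $K_\Gamma^{(m)}$ has a simple eigenvalue $\lambda_m$ close to $1/(2|m|+1)$.

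To conclude, I would choose the parameters so that for all sufficiently large $m$,
\[
\beta_m(\alpha) \;<\; \lambda_m \;<\; \beta_0(\alpha).
\]
Then $\lambda_m$ is an isolated eigenvalue of $K_\Gamma^{(m)}$ but lies inside $\sigma_{\ess}(K_\Gamma^{(0)})\subset\sigma_{\ess}(K_\Gamma)$; since the Fourier components act on mutually orthogonal subspaces of $\mathcal{E}$, each such $\lambda_m$ is a genuine embedded eigenvalue of $K_\Gamma$, giving infinitely many. The principal obstacle is the perturbation step: the cone deformation is not small in operator norm, and the spectral estimates must be quantitative and uniform in the Fourier index $m$ so that the embedding chain holds for infinitely many modes simultaneously. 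This requires careful control of the Mellin symbol governing $\beta_m(\alpha)$ and of the interaction between the singular cone region and the smooth sphere region, and it explains the hypotheses of small perturbation, controlled curvature, and flat cone mentioned in the abstract.
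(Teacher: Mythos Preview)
Your outline follows the paper's strategy closely: Fourier decompose by rotational symmetry, bound the essential spectral radius of each mode by $O(|\alpha-\pi/2|/|m|)$ via the Mellin symbol, and perturb the top sphere eigenvalue $1/(2|m|+1)$ into an isolated eigenvalue of $K_\Gamma^{(m)}$ which then lands inside $\sigma_{\ess}(K_\Gamma^{(0)})$. Two points deserve correction or sharpening.

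First, the modal essential spectrum is not the symmetric interval $[-\beta_m,\beta_m]$. For an outward cone ($0<\alpha<\pi/2$) the paper shows $\sigma_{\ess}(K_\gamma^m,\mathcal{E}_m)=[0,|\sigma_{m,\alpha}|]$ and hence $\sigma_{\ess}(K_\Gamma,\mathcal{E})=[0,|\sigma_{0,\alpha}|]$; for an inward cone the interval is $[-|\sigma_{0,\alpha}|,0]$. This does not harm your embedding argument because the perturbed eigenvalues are positive, but your stated form is wrong.

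Second, and more substantively, the paper does \emph{not} obtain a perturbed eigenvalue for all large $m$; it obtains one only for infinitely many $m$. The perturbation $K_\gamma^n-K_{\gamma_0}^n$ is split into a ``near'' piece $\chi_1\mathfrak{K}^n$ supported over the tip, controlled directly by $O((|\alpha-\pi/2|+\varepsilon)/n)$ via Schur-type estimates on the Legendre kernel (Lemma~\ref{lem:localest}), and a ``far'' piece $\chi_2\mathfrak{K}^n\tilde\chi_1$ with disjoint supports. For the latter the paper does not prove uniform decay in $n$; instead it shows the full 3D kernel lies in $H^\mu(\Gamma_0\times\Gamma_0)$ for some $\mu>1$, hence the operator is in a Schatten class $S_p$ with $p<1$, so $\sum_n\|\chi_2\mathfrak{K}^n\tilde\chi_1\|^p<\infty$, which forces $\|\chi_2\mathfrak{K}^n\tilde\chi_1\|<n^{-1/p}$ for infinitely many $n$ (Lemmas~\ref{lem:Schatten} and~\ref{lemma:keyKS0}). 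The resulting approximate-eigenpair estimate in the energy norm (Lemma~\ref{lemma:approxevalue}) also leans on the sphere-specific identity $S_{\Gamma_0}=2K_{\Gamma_0}$, which lets one pass between $L^2$ and $\mathcal{E}_n$ norms with the right $1/n$ scaling; your ``resolvent identity and generalized convergence'' sketch does not indicate how you would get the needed $O(|\alpha-\pi/2|/m)$ error in $\mathcal{E}_m$, and since both $1/(2m+1)$ and $\beta_m(\alpha)$ are of order $1/m$, anything weaker is insufficient.
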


These are the main elements of the proof.

\begin{enumerate}[label=(\alph*)]
\item
By rotational symmetry, $K_\Gamma$ decomposes into Fourier components $K^n_\gamma$, $n \in \mathbb{Z}$.
\item
For the unit sphere $\Gamma_0$, the largest eigenvalue of $K^n_{\gamma_0}$ is $\lambda_n=1/(2|n|+1)$.
\item
Perturbing the sphere by a conical singularity imparts an interval of essential spectrum of size $O(1/|n|)$ to $K^n_\gamma$.
\item\label{flat}
If the angle of the perturbation is flat enough and $n$ is large, the essential spectrum of $K^n_\gamma$ does not overlap~$\lambda_n$.
\item\label{small}
If the perturbation is sufficiently small and shallow, with controlled curvature, then for infinitely many $n$, $\lambda_n$ gets perturbed to an eigenvalue of $K^n_\gamma$, while avoiding its essential spectrum.
\item
Infinitely many of the perturbed eigenvalues will be embedded in the essential spectrum of each of the Fourier components.
\end{enumerate}

Point \ref{small} presents the greatest challenge, and most of the analysis is dedicated to it.
To achieve control over the perturbed eigenvalues for infinitely many~$n$, one of the requirements is a uniform bound on the curvature of the perturbed profile. This is accomplished by coordinating the angle $\alpha$ of the conical perturbation and its size~$\varepsilon$. 

The background to our work is provided by \cite{HelsingPerfekt2018a}, where the essential spectrum of $K_\Gamma \colon \mathcal{E} \to \mathcal{E}$ was computed for surfaces $\Gamma$ with rotationally symmetric conical points; see Theorem~\ref{thm:essspec}. As we need to refine the associated estimates, we will revisit a number of the calculations from \cite{HelsingPerfekt2018a}. 

\smallskip
The paper is organized as follows. In Section~\ref{sec:background} we discuss the structure of $K_\Gamma$ for rotationally symmetric surfaces $\Gamma$. In Section~\ref{sec:main} we outline the proof of Theorem~\ref{thm:infemb}. Many of the details of the proof are deferred to the more technical Section~\ref{sec:est}.

\section{Preliminaries} \label{sec:background}

This section describes the following preliminary material: (i) The Fourier decomposition of the adjoint of the NP operator on rotationally symmetric surfaces and special-function representations of the associated kernels, needed for the subsequent perturbation analysis, (ii) the essential spectrum of the Fourier components for a surface with a conical singularity, and (iii) the spectrum for the sphere.

\subsection*{Notation}  For two non-negative quantities $a = a(g)$ and $b = b(g)$ that depend on the choice of an object $g\in\mathcal{G}$ within some collection of objects $\mathcal{G}$ (such as a class of admissible curves), the expression $a \lesssim b$ means that there exists a constant $C > 0$ such that $a(g) \leq Cb(g)$ for every object $g\in\mathcal{G}$. If $a \lesssim b$ and $b \lesssim a$, we write $a \approx b$.

\subsection{Rotational symmetry}

Consider a connected rotationally symmetric Lipschitz surface $\Gamma$ with parametrization
\begin{equation*}
\br(t,\theta) = (\gamma_1(t)\cos\theta, \gamma_1(t)\sin\theta, \gamma_2(t)), \quad \theta\in[0,2\pi], \quad t\in [0,1],
\end{equation*}
for two Lipschitz functions $\gamma_1$ and $\gamma_2$. We say that $\Gamma$ is generated by $\gamma(t) = (\gamma_1(t), \gamma_2(t))$. We use this parametrization to work with functions $f \colon \Gamma \rightarrow \mathbb C$ and integral kernels by writing $f(t,\theta):= f(\br(t,\theta))$ and $ K_{\Gamma}(t,\theta, t', \theta') :=  K_{\Gamma}(\br(t,\theta), \br(t', \theta'))$.  Then
\begin{equation*}
K_{\Gamma} f(t,\theta) \,=\, \frac{1}{2\pi} \int_0^{2\pi}\!\! \int_0^1  K_{\Gamma}(t,\theta, t', \theta') f(t',\theta')  \,\gamma_1(t')|\gamma'(t')|\, dt' \, d\theta'.
\end{equation*}
Let $f^n(t)$ be the $n$th Fourier coefficient of $f(t,\theta)$,
\begin{equation*}
f^n(t) \;:=\; \frac{1}{\sqrt{2\pi}} \int_0^{2\pi} e^{-i n\theta} f(t,\theta) d\theta, \quad t\in [0,1].
\end{equation*}
This Fourier transform $f(\br) \mapsto \{f^n(t)\}$ provides the decomposition
$$
L^2(\Gamma,d\sigma) \;\simeq\; \bigoplus_{n \in \mathbb{Z}} L^2\big([0,1],\, \gamma_1(t)|\gamma'(t)| \, dt\big),
$$
where $\simeq$ denotes unitary equivalence, under which 
\begin{equation*}
f(t,\theta) \;=\; \frac{1}{\sqrt{2\pi}}\sum_{n=-\infty}^{\infty} f^n(t) e^{i n\theta},
\end{equation*}
and 
\begin{equation*}
\int_\Gamma |f(\br)|^2d\sigma(\br) = \sum_{n=-\infty}^{\infty}\int_0^1 |f^n(t)|^2 \gamma_1(t)|\gamma'(t)|dt.
\end{equation*}
Due to rotational invariance, the Fourier transform of the adjoint NP kernel,
\begin{equation*}
K_{\gamma}^n(t,t') \;:=\; \frac{1}{2\pi}\int_0^{2\pi} e^{-i n\theta} K_{\Gamma}(t,\theta, t',0) d\theta, \quad t, t'\in [0,1],
\end{equation*}
realizes the decomposition of $K_\Gamma$ into integral operators acting on Fourier components,
\begin{equation*}
(K_{\Gamma} f)^n(t) \,=\,  \int_0^1 K_{\gamma}^n(t,t')f^n(t') \,\gamma_1(t')|\gamma'(t')|dt'.
\end{equation*}
Similarly, the kernels 
\begin{equation*}
S_{\gamma}^n(t,t') \,=\, \frac{1}{2\pi} \int_0^{2\pi}e^{-in\theta} S_{\Gamma}(t,\theta,t',0) \, d\theta
\end{equation*}
decompose the single-layer operator.
Denote the component operators with the same symbol as their kernels:
\begin{equation*}
(K_{\gamma}^n g)(t) =  \int_0^1 K_{\gamma}^n(t,t') g(t') \gamma_1(t') |\gamma'(t')| \, dt', \quad (S_{\gamma}^n g)(t) = \int_0^1 S_{\gamma}^n(t,t') g(t') \gamma_1(t') |\gamma'(t')| \, dt',
\end{equation*}
acting in $L^2\big([0,1],\, \gamma_1(t)|\gamma'(t)| \, dt\big)$.
Letting $\mathcal{E}_n = \mathcal{E}_n(\gamma)$ be the completion of $L^2([0,1], \gamma_1(t)|\gamma'(t)| \, dt)$ with respect to the norm $\|g\|^2_{S_{\gamma}^n} : = \langle S_{\gamma}^n g,g \rangle$ \cite[Sec.\,7.1]{HelsingPerfekt2018a}, one obtains the unitary equivalence
$$
\mathcal{E}(\Gamma) \simeq \bigoplus_{n \in \mathbb{Z}} \mathcal{E}_n(\gamma).
$$
The decompositions of both $L^2(\Gamma,d\sigma)$ and $\mathcal{E}(\Gamma)$ induce corresponding decompositions of the operators
$$
K_{\Gamma} \simeq \bigoplus_{n \in \mathbb{Z}} K_{\gamma}^n, \qquad S_{\Gamma} \simeq \bigoplus_{n \in \mathbb{Z}} S_{\gamma}^n.
$$

The kernels $K^n_\gamma(t,t')$ and $S^n_\gamma(t,t')$ can be expressed in terms of special functions; see the Appendix of~\cite{HelsingPerfekt2018a} for more details. For $n \geq 0$,
\begin{equation} \label{eq:modalformula} 
K^n_\gamma(t, t') = \frac{1}{\sqrt{2\pi^3\gamma_1(t)\gamma_1(t')}} 
\left[\frac{\gamma_2'(t)}{2\gamma_1(t)|\gamma'(t)|}
\left(\mathfrak{Q}_{n-1/2}(\chi)+\mathfrak{R}_n(\chi)\right) 
- |\gamma(t)-\gamma(t')|K_{\Gamma}(t,0,t',0)\mathfrak{R}_n(\chi)\right]
\end{equation}
and
\begin{equation*} 
S^n_\gamma(t, t') \;=\; \frac{1}{\sqrt{2\pi^3\gamma_1(t)\gamma_1(t')}} 
\mathfrak{Q}_{n-1/2}(\chi),\end{equation*}
where
$$\chi=1+\frac{|\gamma(t)-\gamma(t')|^2}{2\gamma_1(t)\gamma_1(t')},$$
$\mathfrak{Q}_{n-1/2}$ is a half-integer degree Legendre function of
the second kind,
$$\mathfrak{Q}_{n-1/2}(\chi) \;=\; \int_{-\pi}^\pi \frac{\cos(n\theta) \, d\theta}{\sqrt{8(\chi-\cos(\theta))}} = \int_{-\pi}^\pi \frac{e^{in\theta} \, d\theta}{\sqrt{8(\chi-\cos(\theta))}},$$
and
$$\mathfrak{R}_n(\chi) \;=\; \frac{2n-1}{\chi+1}\left(\chi\mathfrak{Q}_{n-1/2}(\chi)-\mathfrak{Q}_{n-3/2}(\chi) \right).$$
For $n < 0$, we have $K_{\gamma}^n(t,t') = K_{\gamma}^{-n}(t,t')$.

The following lemma will be proved in Section~\ref{app:Qasymp}. Note that $\mathfrak{Q}_{n-1/2}(1 + \delta^2) \geq 0$ for every $\delta > 0$ and $n \geq 0$.
\begin{lemma} \label{lem:Qasymp}
Let $n \in \mathbb{N}$ and $\delta > 0$.  If $\delta \geq \cone$, then
$$\mathfrak{Q}_{n-1/2}(1+\delta^2) \;\lesssim\; \frac{1}{n^2 \delta^3}, \quad  |\mathfrak{R}_n(1+\delta^2)| \;\lesssim\; \frac{1}{n^2\delta^3}.$$

If $\delta <2$, then
$$\mathfrak{Q}_{n-1/2}(1+\delta^2) \;\lesssim\; \frac{1}{(n\delta)^2}, \quad  |\mathfrak{R}_n(1+\delta^2)| \;\lesssim\; \frac{1}{(n\delta)^2}.$$

If $n\delta < 1/2$, then 
$$\mathfrak{Q}_{n-1/2}(1+\delta^2) \;\lesssim\; \log \frac{1}{n\delta}, \quad |\mathfrak{R}_n(1+\delta^2)| \;\lesssim\;  \log \frac{1}{n\delta}.$$
\end{lemma}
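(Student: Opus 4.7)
The plan is to work directly from the integral representation
\begin{equation*}
\mathfrak{Q}_{n-1/2}(1+\delta^2) \;=\; \int_{-\pi}^{\pi} \frac{e^{in\theta}\,d\theta}{\sqrt{8\,u(\theta)}}, \qquad u(\theta):=\delta^2+2\sin^2(\theta/2),
\end{equation*}
and to reduce the $\mathfrak{R}_n$ estimates to analogous estimates for $\tfrac{d}{d\chi}\mathfrak{Q}_{n-1/2}$. The standard Legendre recurrence $(\chi^2-1)\tfrac{d}{d\chi}\mathfrak{Q}_\nu(\chi)=\nu(\chi\mathfrak{Q}_\nu(\chi)-\mathfrak{Q}_{\nu-1}(\chi))$ with $\nu=n-1/2$, combined with the definition of $\mathfrak{R}_n$, yields the identity
\begin{equation*}
\mathfrak{R}_n(1+\delta^2) \;=\; 2\delta^2\,\tfrac{d}{d\chi}\mathfrak{Q}_{n-1/2}(1+\delta^2) \;=\; -\frac{\delta^2}{\sqrt{8}}\int_{-\pi}^{\pi} \frac{e^{in\theta}\,d\theta}{u(\theta)^{3/2}}.
\end{equation*}
Thus the six bounds reduce to three, applied once with integrand $u^{-1/2}$ and once with $u^{-3/2}$, the extra factor $\delta^2$ reconciling the powers of $\delta$ that emerge.

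For $\mathfrak{Q}_{n-1/2}$, set $g:=u^{-1/2}$. If $\delta\geq 1$ then $u\geq\delta^2$ throughout, a direct computation gives $|g''(\theta)|\lesssim 1/\delta^3$ pointwise, and two integrations by parts (boundary terms vanish by periodicity) yield $|\mathfrak{Q}_{n-1/2}|\lesssim n^{-2}\int|g''|\,d\theta\lesssim 1/(n^2\delta^3)$. If $\delta<2$ the uniform pointwise bound fails near $\theta=0$, but the scaling $\theta=\delta\tau$ in the dominant terms $\int u^{-3/2}\,d\theta$ and $\int\sin^2\theta\,u^{-5/2}\,d\theta$ shows $\int_{-\pi}^{\pi}|g''(\theta)|\,d\theta\lesssim 1/\delta^2$, giving the bound $1/(n\delta)^2$. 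If $n\delta<1/2$ one cannot afford two integrations by parts; I instead split the integration at $|\theta|=1/n$. On $|\theta|<1/n$, which strictly contains $|\theta|\leq\delta$, use $|e^{in\theta}|\leq 1$ and the estimate $\int_{-1/n}^{1/n} g\,d\theta\lesssim 1+\log(1/(n\delta))$, the logarithm arising from the annulus $\delta\lesssim|\theta|\lesssim 1/n$ where $g(\theta)\approx 1/|\theta|$. On $1/n\leq|\theta|\leq\pi$ integrate by parts once; both the boundary term ($\lesssim n^{-1}\cdot n=O(1)$) and the interior piece ($\lesssim n^{-1}\int_{1/n}^{\pi}\theta^{-2}\,d\theta=O(1)$) are of order one.

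For $\mathfrak{R}_n$, the same three arguments applied to $g_2:=u^{-3/2}$ give $|g_2''|\lesssim 1/\delta^5$ when $\delta\geq 1$, and $\int|g_2''|\,d\theta\lesssim 1/\delta^4$ when $\delta<2$; hence $|\tfrac{d}{d\chi}\mathfrak{Q}_{n-1/2}|\lesssim 1/(n^2\delta^5)$ and $1/(n^2\delta^4)$ respectively. Multiplying by $2\delta^2$ recovers exactly $1/(n^2\delta^3)$ and $1/(n\delta)^2$ for $|\mathfrak{R}_n|$. In the regime $n\delta<1/2$ the trivial bound $|\tfrac{d}{d\chi}\mathfrak{Q}_{n-1/2}|\leq C\int u^{-3/2}\,d\theta\lesssim 1/\delta^2$ already suffices, since $2\delta^2\cdot\delta^{-2}=O(1)\lesssim\log(1/(n\delta))$ (using $n\delta<1/2\Rightarrow\log(1/(n\delta))>\log 2$). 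The only point requiring real care is the scaling of the integrals $\int u^{-k/2}\,d\theta$ and $\int\sin^2\theta\,u^{-(k+2)/2}\,d\theta$ for $\delta<2$: these have to be done by splitting into the core $|\theta|<\delta$ and the tail $|\theta|>\delta$ and matching powers of $\delta$ from each.
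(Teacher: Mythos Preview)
Your proof is correct and, for $\mathfrak{R}_n$, genuinely different from the paper's argument. Both proofs handle $\mathfrak{Q}_{n-1/2}$ by integration by parts against $e^{in\theta}$; the paper writes out the first three iterates explicitly and then rescales $\theta\mapsto\theta/n$, whereas you bound $\int|g''|\,d\theta$ directly and, in the regime $n\delta<1/2$, split at $|\theta|=1/n$ before a single integration by parts on the tail. These are minor organizational differences.

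The substantive difference is in the treatment of $\mathfrak{R}_n$. The paper decomposes
\[
\mathfrak{R}_n(\chi)=\frac{2n-1}{\chi+1}\Bigl(\delta^2\mathfrak{Q}_{n-1/2}(\chi)+\bigl(\mathfrak{Q}_{n-1/2}(\chi)-\mathfrak{Q}_{n-3/2}(\chi)\bigr)\Bigr)
\]
and spends most of the effort on the difference $n(\mathfrak{Q}_{n-1/2}-\mathfrak{Q}_{n-3/2})$, which requires writing $e^{in\theta}(1-e^{-i\theta})$, integrating by parts, and separately estimating three resulting integrals in each regime. Your route via the Legendre derivative recurrence $(\chi^2-1)\mathfrak{Q}_\nu'=\nu(\chi\mathfrak{Q}_\nu-\mathfrak{Q}_{\nu-1})$ collapses $\mathfrak{R}_n$ to the single identity $\mathfrak{R}_n(1+\delta^2)=2\delta^2\,\mathfrak{Q}_{n-1/2}'(1+\delta^2)$, so that the $\mathfrak{R}_n$ bounds become a verbatim repetition of the $\mathfrak{Q}_{n-1/2}$ argument with $u^{-3/2}$ in place of $u^{-1/2}$, the prefactor $\delta^2$ exactly absorbing the two extra powers of $\delta$ in the denominator. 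This is shorter and more conceptual; the paper's approach, by contrast, is entirely self-contained and does not invoke any external Legendre identities. In the regime $n\delta<1/2$ your observation that the trivial bound $\delta^2\int u^{-3/2}\,d\theta=O(1)$ already suffices for $|\mathfrak{R}_n|$ is a further economy over the paper's explicit treatment.
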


\begin{remark}
By successive integrations by part, it can actually be shown that
if $\delta \geq \cone$, then
$$\mathfrak{Q}_{n-1/2}(1+\delta^2) \;\lesssim\; \frac{1}{n^A \delta^3}, \quad  |\mathfrak{R}_n(1+\delta^2)| \;\lesssim\; \frac{1}{n^A\delta^3},$$
for any $A > 0$.
\end{remark}
\subsection{The essential spectrum from a conical singularity}
Suppose that $\Gamma$ is obtained by revolution of a $C^5$ curve $\gamma$, and that $\Gamma$ is $C^1$ except for one conical singularity that forms an angle $\alpha$ with the rotational axis.  An angle $\alpha\in(0,\frac{\pi}{2})$ corresponds to a conical singularity pointing outward, and an angle $\alpha\in(\frac{\pi}{2},\pi)$ corresponds to an inward-pointing conical singularity. 

The essential spectra of $K_\Gamma \colon \mathcal{E} \to \mathcal{E}$ and $K_\Gamma \colon L^2(\Gamma) \to L^2(\Gamma)$ were characterized in \cite{HelsingPerfekt2018a}. To state the result, let $\Pi^n_\alpha$, $n \in \mathbb{Z}$, denote the holomorphic function
\begin{equation} \label{eq:Pidef}
\Pi^n_\alpha(z) = \frac{P_{z-2}^{|n|}(\cos \alpha)\dot{P}_{z-2}^{|n|}(-\cos \alpha) - P_{z-2}^{|n|}(-\cos \alpha)\dot{P}_{z-2}^{|n|}(\cos \alpha)}{P_{z-2}^{|n|}(-\cos \alpha)\dot{P}_{z-2}^{|n|}(\cos \alpha)+P_{z-2}^{|n|}(\cos \alpha)\dot{P}_{z-2}^{|n|}(-\cos \alpha)}, \quad 0 < \mre z < 3,
\end{equation}
where $P^{|n|}_{z-2}(x)$ denotes an associated Legendre function of the first kind, and $\dot{P}_{z-2}^{|n|}(x)$ its derivative with respect to $x$.

\begin{theorem}[{\cite[Theorems~5.5 and 6.3]{HelsingPerfekt2018a}}] \label{thm:essspec}
Let $n \in \mathbb{Z}$. The essential spectrum of $K_{\gamma}^n \colon \mathcal{E}_n \to \mathcal{E}_n$ is a real interval,
$$\sigma_{\ess}(K_{\gamma}^n, \mathcal{E}_n) = \{\Pi^n_\alpha(3/2+ i\xi) \, : \, -\infty \leq \xi \leq \infty\}.$$
The essential spectrum of $K_{\gamma}^n \colon L^2\big([0,1],\, \gamma_1(t)|\gamma'(t)| \, dt\big) \to L^2\big([0,1],\, \gamma_1(t)|\gamma'(t)| \, dt\big)$ is a complex curve,
$$\sigma_{\ess}(K_{\gamma}^n, L^2\big([0,1],\, \gamma_1(t)|\gamma'(t)| \, dt\big)) = \{\Pi^n_\alpha(1+ i\xi) \, : \, -\infty \leq \xi \leq \infty\}.$$
\end{theorem}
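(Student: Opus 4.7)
The plan is to reduce the computation of the essential spectrum to a model operator on an infinite cone, diagonalize that model operator via the Mellin transform, and then read off the essential spectrum as the image of a vertical line under the resulting symbol.

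First I would localize near the conical point. Using the modal formula \eqref{eq:modalformula}, away from the cone tip the profile $\gamma$ is $C^5$, so the kernel $K^n_\gamma(t,t')$ is smooth enough that its integral operator is Fredholm modulo a compact perturbation on each of the two spaces in question. A Weyl-type argument then shows that the essential spectrum is determined entirely by the behavior of $K_\gamma^n$ in an arbitrarily small neighborhood of the conical tip (say $t=0$). Replacing $\gamma$ in that neighborhood by its tangent cone $\gamma_\alpha(t) = (t\sin\alpha,\,t\cos\alpha)$ introduces a remainder whose kernel has strictly better radial behavior at the tip; I would verify that this remainder is relatively compact on both $\mathcal{E}_n$ and the weighted $L^2$, so that $\sigma_{\ess}(K_\gamma^n) = \sigma_{\ess}(K_{\gamma_\alpha}^n)$.

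Next I would exploit the scaling structure of the exact cone. On $\gamma_\alpha$ the kernel $K_{\gamma_\alpha}^n(t,t')$ is positively homogeneous of degree $-1$, the weight $\gamma_1(t)|\gamma'(t)|\,dt$ is a constant multiple of $t\,dt$, and $\chi$ depends only on the ratio $t/t'$. Passing to the logarithmic coordinate $s = -\log t$ turns $K_{\gamma_\alpha}^n$ into a convolution on the real line, which the Mellin transform diagonalizes. The symbol is the Mellin transform of the one-variable kernel
\[
M(z) \;=\; \int_0^\infty K_{\gamma_\alpha}^n(1,\tau)\,\tau^{z}\,d\tau,
\]
and evaluating it requires plugging in the $\mathfrak{Q}_{n-1/2}$ and $\mathfrak{R}_n$ representations from \eqref{eq:modalformula} and using the classical Mellin–Barnes identity that converts integrals of $\mathfrak{Q}_{n-1/2}(1+x^2)$ against power weights into associated Legendre functions $P_{z-2}^{|n|}(\pm\cos\alpha)$ and their derivatives. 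The numerator–denominator form of \eqref{eq:Pidef} then emerges from combining the two jump contributions of the double-layer potential across the conical tip.

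Finally I would identify the critical vertical line for each function space. The measure $t\,dt$ makes the Mellin transform a unitary isomorphism from $L^2([0,\infty),\,t\,dt)$ onto $L^2(\{\mre z = 1\})$, giving the $L^2$ essential spectrum as the image $\{\Pi^n_\alpha(1+i\xi)\}$. The energy space $\mathcal{E}_n$ is locally equivalent to $H^{-1/2}$ on the cone, which under Mellin transform corresponds to a weighted $L^2$ on the shifted line $\mre z = 3/2$; since $K_\gamma^n$ acts on $\mathcal{E}_n$ as a Mellin multiplier, its essential spectrum on $\mathcal{E}_n$ is the image of that shifted line, and self-adjointness on $\mathcal{E}_n$ forces this image to be real.

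The main obstacle is the explicit identification of the Mellin symbol with $\Pi^n_\alpha$. The kernel mixes Legendre functions of half-integer degree with the geometric factor $\gamma_2'/(2\gamma_1|\gamma'|)$ and the nonlinear dependence of $\chi$ on $t/t'$, so collapsing the resulting double integral to the compact form \eqref{eq:Pidef} rests on a careful contour manipulation and the Mellin–Barnes integral for $P^{|n|}_{z-2}$. A secondary technical point is justifying the relative compactness used to reduce from $\gamma$ to $\gamma_\alpha$: because the weight $\gamma_1(t)$ degenerates at the tip and the kernel is only conditionally integrable, standard Hilbert–Schmidt or smoothing arguments must be adapted to the Mellin scale.
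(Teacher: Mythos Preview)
The paper does not prove this theorem; it is quoted verbatim from \cite[Theorems~5.5 and 6.3]{HelsingPerfekt2018a} and used as background, so there is no ``paper's own proof'' to compare against. Your outline (localize to the tip, replace $\gamma$ by the tangent cone $\gamma_\alpha$ modulo a compact remainder, then diagonalize the homogeneous convolution kernel by the Mellin transform and read off the spectrum on the critical lines $\mre z = 1$ and $\mre z = 3/2$) is exactly the strategy carried out in the cited reference, and the present paper even restates the Mellin representation \eqref{eq:mellin} of $\Pi^n_\alpha$ in the proof of Lemma~\ref{lem:essest}. One correction to your sketch: the closed form \eqref{eq:Pidef} in \cite{HelsingPerfekt2018a} is not obtained by Mellin--Barnes manipulations of the $\mathfrak{Q}_{n-1/2}$ integral, but rather by separating variables for the Laplacian on the full cone in spherical coordinates, which produces the associated Legendre functions $P^{|n|}_{z-2}(\pm\cos\alpha)$ directly as the angular eigenfunctions; the kernel identity then follows from matching the jump conditions of the double-layer potential. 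The compactness of the remainder when passing from $\gamma$ to $\gamma_\alpha$ is indeed the delicate point you flag, and in \cite{HelsingPerfekt2018a} it is handled via Mellin pseudodifferential calculus rather than a direct Hilbert--Schmidt estimate.
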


	It follows from \cite[Theorem 7.5]{Bonnet-BenDhia2012} that $\sigma_{\ess}(K_\Gamma, \mathcal{E}) \subset [0,1]$ when $0 < \alpha < \frac{\pi}{2}$ and $\sigma_{\ess}(K_\Gamma, \mathcal{E}) \subset [-1,0]$ when $\frac{\pi}{2} < \alpha < \pi$; the link between the problem studied in \cite[Theorem 7.5]{Bonnet-BenDhia2012} and the spectral theory of $K_\Gamma$ can be found in \cite[Section~2]{BonnetierZhang2017}. That is, for each $n\in \mathbb Z$, $\sigma_{\ess}(K_{\gamma}^n, \mathcal{E}_n)$ is a positive interval containing $0$ when $0 < \alpha < \frac{\pi}{2}$, and a negative interval containing $0$ when  $\frac{\pi}{2} < \alpha < \pi$.

In Section~\ref{sec:est} we will clarify some of the calculations of \cite{HelsingPerfekt2018a} further. In accordance with Theorem~\ref{thm:essspec},
let $|\sigma_{n,\alpha}|$ denote the essential spectral radius of $K^n_\gamma \colon \mathcal{E}_n \to \mathcal{E}_n$,
	$$|\sigma_{n,\alpha}| = \sup \{|\Pi^n_\alpha(3/2+ i\xi)| \, : \, -\infty \leq \xi \leq \infty\},$$
and let $|\tilde{\sigma}_{n,\alpha}|$ denote the essential spectral radius of $K^n_\gamma$ acting on $L^2\big([0,1],\, \gamma_1(t)|\gamma'(t)| \, dt\big)$,
$$|\tilde{\sigma}_{n,\alpha}| = \sup \{|\Pi^n_\alpha(1+ i\xi)| \, : \, -\infty \leq \xi \leq \infty\}.$$
\begin{lemma} \label{lem:essest}
For  $0 < \alpha < \frac{\pi}{2}$ we have that $|\sigma_{0, \alpha}| = \Pi^0_\alpha(3/2)$ and 
\begin{equation} \label{eq:Ktotspec}
\sigma_{\ess}(K_\Gamma, \mathcal{E}) = [0, |\sigma_{0, \alpha}|].
\end{equation}
 For $\frac{\pi}{2} < \alpha < \pi$ we instead have that $|\sigma_{0, \alpha}| = -\Pi^0_\alpha(3/2)$ and $\sigma_{\ess}(K_\Gamma, \mathcal{E}) = [-|\sigma_{0, \alpha}|, 0]$.

For every $n \in \mathbb{Z}$ and $\alpha \neq \pi/2$, we have that
$$|\sigma_{n,\alpha}| \leq |\tilde{\sigma}_{n,\alpha}|.$$
Furthermore, for every $c_0\in (0,\frac{\pi}{2})$ there exists a constant $C_0$ such that if $|\alpha -\frac{\pi}{2}| < c_0$, then
	\begin{equation}\label{eq:bdess}
|\tilde{\sigma}_{n,\alpha}| \leq C_0 \frac{|\alpha - \pi/2|}{1+|n|}, \qquad n \in \mathbb{Z}.
	\end{equation}
\end{lemma}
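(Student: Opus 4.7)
The plan is to combine three ingredients: the functional symmetry $\Pi^n_\alpha(z) = \Pi^n_\alpha(3-z)$, the identity $\Pi^n_{\pi/2} \equiv 0$, and the orthogonal decomposition $\sigma_{\ess}(K_\Gamma, \mathcal E) = \overline{\bigcup_{n} \sigma_{\ess}(K_\gamma^n, \mathcal E_n)}$. The first follows from the Legendre reflection $P^{|n|}_{\mu} = P^{|n|}_{-1-\mu}$ (the associated Legendre ODE is invariant under $\mu \leftrightarrow -1-\mu$): setting $\mu = z-2$ gives $P^{|n|}_{z-2} = P^{|n|}_{(3-z)-2}$, and the quotient \eqref{eq:Pidef} inherits the invariance. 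The vanishing $\Pi^n_{\pi/2}\equiv 0$ is immediate because $\pm\cos(\pi/2)=0$ makes the numerator of \eqref{eq:Pidef} identically zero. Combined with Schwarz reflection, the $z \mapsto 3-z$ symmetry implies $\Pi^n_\alpha(3/2+i\xi) \in \mathbb R$, so each $\sigma_{\ess}(K_\gamma^n, \mathcal E_n)$ is a real interval; the sign inclusion from \cite[Theorem~7.5]{Bonnet-BenDhia2012} pins it to one side of $0$ with endpoint $0$, depending on $\alpha \gtrless \pi/2$.

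For the inequality $|\sigma_{n,\alpha}| \leq |\tilde\sigma_{n,\alpha}|$, I apply Hadamard's three-lines theorem to $\Pi^n_\alpha$ on the closed substrip $\{1 \leq \mre z \leq 2\}$, where $\Pi^n_\alpha$ is holomorphic and bounded (boundedness follows from the regularity of associated Legendre functions in the degree $\nu$ away from the poles). Since $z \mapsto 3-z$ maps $\mre z = 1$ to $\mre z = 2$, the symmetry forces $\sup_\xi|\Pi^n_\alpha(1+i\xi)| = \sup_\xi|\Pi^n_\alpha(2+i\xi)| = |\tilde\sigma_{n,\alpha}|$, and log-convexity of $x \mapsto \sup_\xi|\Pi^n_\alpha(x+i\xi)|$ yields $\sup_\xi|\Pi^n_\alpha(3/2+i\xi)| \leq |\tilde\sigma_{n,\alpha}|$.

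The quantitative bound \eqref{eq:bdess} is the main technical step. Setting $x = \cos\alpha$, the vanishing $\Pi^n_{\pi/2}\equiv 0$ together with the fact that $\alpha$ enters \eqref{eq:Pidef} only through $\pm\cos\alpha$ yields the Taylor expansion
$$\Pi^n_\alpha(z) \;=\; x \cdot \partial_x\Pi^n_\alpha(z)\big|_{x=0} + O(x^2),$$
with $|x| \lesssim |\alpha - \pi/2|$ whenever $|\alpha - \pi/2| < c_0 < \pi/2$. Differentiating \eqref{eq:Pidef}, using $\dot P^{|n|}_{z-2}(x) = \partial_x P^{|n|}_{z-2}(x)$ and evaluating the Legendre ODE at $x = 0$, collapses $\partial_x\Pi^n_\alpha(z)|_{x=0}$ to an explicit rational expression in $P^{|n|}_{z-2}(0)$, $\partial_x P^{|n|}_{z-2}(0)$, and $|n|^2 - (z-1)(z-2)$. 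Substituting the standard closed-form gamma-function values for $P^{|n|}_\nu(0)$ and $\partial_x P^{|n|}_\nu(0)$ and applying Stirling asymptotics uniformly along $\mre\nu = -1$ extracts the $1/(1+|n|)$ decay; the constant $C_0$ absorbs the $c_0$-dependent second-order Taylor remainder.

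Finally, the structural identification follows by assembling these pieces: the Fourier decomposition writes $\sigma_{\ess}(K_\Gamma, \mathcal E)$ as the closure of a union of real intervals all anchored at $0$ and lying on the same side of $0$, so it is itself such an interval. The identification of the far endpoint with $\Pi^0_\alpha(3/2)$ reduces to two monotonicity facts directly from \eqref{eq:Pidef}: the real-valued function $\xi \mapsto \Pi^0_\alpha(3/2+i\xi)$ is maximized in modulus at $\xi = 0$, and $|\Pi^n_\alpha(3/2+i\xi)|$ decreases in $|n|$ for each fixed $\xi$; both are verified by differentiating \eqref{eq:Pidef} and using monotonicity of the Legendre function quotient in the index and the imaginary part. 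The hardest step is the uniform Stirling analysis behind \eqref{eq:bdess}: along $\mre\nu = -1$ the standard expansions of $P^{|n|}_\nu(0)$ develop oscillatory phases, so tracking the gamma-function ratios carefully enough to rule out hidden growth in $|n|$ or $|\xi|$ requires some effort.
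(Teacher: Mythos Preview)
Your treatment of $|\sigma_{n,\alpha}|\le|\tilde\sigma_{n,\alpha}|$ via the symmetry $\Pi^n_\alpha(z)=\Pi^n_\alpha(3-z)$ and Hadamard's three-lines theorem matches the paper exactly. The remaining two parts, however, diverge from the paper's argument, and in one of them there is a genuine gap.

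\textbf{The bound \eqref{eq:bdess}.} Your Taylor expansion in $x=\cos\alpha$ gives
\[
\Pi^n_\alpha(z)=x\,\partial_x\Pi^n_\alpha(z)\big|_{x=0}+R_n(z,x),\qquad R_n(z,x)=O(x^2),
\]
and you argue that the first-order term decays like $|x|/(1+|n|)$. But the target inequality is $|\tilde\sigma_{n,\alpha}|\le C_0\,|\alpha-\pi/2|/(1+|n|)$, so the remainder $R_n$ must \emph{also} carry the factor $1/(1+|n|)$; an $O(x^2)$ bound with constant depending only on $c_0$ is insufficient, since for large $|n|$ the right-hand side tends to $0$ while your remainder does not. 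You never establish $n$-decay for $R_n$, and the Stirling analysis you describe only addresses the linear term. The paper sidesteps this entirely: it uses the Mellin representation $\Pi^n_\alpha(z)=\sin\alpha\int_0^\infty t^{z-1}K^n_{w_\alpha}(t,1)\,dt$ together with the explicit formula
\[
K^n_{w_\alpha}(t,1)=\frac{\cos\alpha}{2\sqrt{2\pi^3}\sin^2\alpha}\,\frac{\mathfrak Q_{n-1/2}(\chi)+\mathfrak R_n(\chi)}{t^{3/2}},
\]
so that $\cos\alpha$ factors out \emph{exactly} (no Taylor remainder), and the remaining integral $\int_0^\infty\mathfrak P_n(1+s^2)\,ds$ is shown to be $\lesssim 1/n$ directly from Lemma~\ref{lem:Qasymp}.

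\textbf{The structural identification.} Your claim that $\xi\mapsto|\Pi^0_\alpha(3/2+i\xi)|$ is maximized at $\xi=0$ and that $|\Pi^n_\alpha(3/2+i\xi)|$ is monotone in $|n|$, ``verified by differentiating \eqref{eq:Pidef},'' is not substantiated; differentiating a quotient of associated Legendre functions of complex degree does not obviously yield either monotonicity. The paper instead observes that for $0<\alpha<\pi/2$ the cone $W_\alpha$ is convex, so $K_{W_\alpha}\ge 0$ and hence $K^0_{w_\alpha}\ge 0$; the Mellin formula then gives $|\Pi^n_\alpha(3/2+i\xi)|\le\sin\alpha\int_0^\infty t^{1/2}|K^n_{w_\alpha}(t,1)|\,dt\le\Pi^0_\alpha(3/2)$ in one line, and $|\sigma_{0,\alpha}|=\Pi^0_\alpha(3/2)$ by the same positivity.
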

	
	Suppose that $\Gamma$ is perturbed smoothly so that the conical singularity is retained but the rotational symmetry is destroyed, by applying a smooth diffeomorphism $\psi$ of $\mathbb{R}^3$ which is conformal at the vertex of $\Gamma$, $\Gamma' = \psi(\Gamma)$. Then the techniques of \cite{Med97} can be adapted to the $L^2$-setting to view $K_{\Gamma'} \colon L^2(\Gamma') \to L^2(\Gamma')$ as a compact perturbation of an operator similar to $K_{\Gamma} \colon L^2(\Gamma) \to L^2(\Gamma)$. Appealing to extrapolation of compactness as in \cite[Theorem~5.22]{LCP21}, the same statement is true when $L^2(\Gamma')$ and $L^2(\Gamma)$ are replaced by $\mathcal{E}(\Gamma')$ and $\mathcal{E}(\Gamma)$, respectively.  Equation~\eqref{eq:Ktotspec} therefore remains valid for the perturbed surface $\Gamma'$. However, we expect any embedded eigenvalues to disappear in general, since symmetry is instrumental for their construction.  

\subsection{The spectrum for a sphere} \label{sec:spherespec}
Let $\Gamma_0$ be the unit sphere, obtained by revolution of $\gamma_0$. The spherical harmonics are in spherical coordinates defined as
\begin{equation*}
Y^m_{\ell} (\beta, \theta) = \sqrt{\frac{2{\ell}+1}{4\pi}\frac{({\ell}-m)!}{({\ell}+m)!}\,}\,P^m_{\ell}(\cos\beta)e^{im\theta}, \quad {\ell} \geq 0, \quad -{\ell}\leq m \leq {\ell}, \; 0<\beta<\pi, \; 0<\theta<2\pi.
\end{equation*} 
They form an orthonormal Hilbert-space basis for $L^2(\Gamma_0)$,
$$
 \int_{\Gamma_0} Y^m_{\ell} (\beta, \theta) \overline{Y^{m'}_{{\ell}'} (\beta, \theta)} \, d\sigma= \delta_{m,m'}\delta_{{\ell},{\ell}'}.
$$
Note that $Y^m_{\ell} $ belongs to the $m$th Fourier space.

Since $\Gamma_0$ is smooth, the spectrum of $K_{\gamma_0}^n$ is the same whether considered as an operator on $L^2([0,1], \gamma_{0,1}(t)|\gamma_0'(t)|dt)$ or on $\mathcal E_n$ \cite{KhavinsonPutinarShapiro2007},
$$
\sigma(K_{\gamma_0}^n) = \left\{ \frac{1}{2{\ell}+1} \;:\; {\ell}\geq |n| \right\} \cup \{0\}.
$$
The eigenvalue $\frac{1}{2{\ell}+1}$ of $K_{\gamma_0}^n$ has eigenfunction $Y^n_{\ell}(\cdot,0)$. The point $0$ of the spectrum is not an eigenvalue. 

In our analysis, we will additionally rely on a very special property for the sphere, namely that 
$$S_{\Gamma_0} = 2K_{\Gamma_0},$$
and therefore that $S_{\gamma_0}^n = 2 K_{\gamma_0}^n$ for all $n \in \mathbb{Z}$.


\section{The NP operator on perturbations of the unit sphere} \label{sec:main}

This section introduces small conical perturbations of the sphere and investigates the corresponding perturbation of eigenvalues. It concludes with a proof of the main Theorem~\ref{thm:infemb}, up to a number of technical estimates deferred to the next section.

\subsection{Perturbation and parametrization}\label{sec:pert}
This subsection sets the notation for our perturbations of the sphere.  The type of perturbation that we will consider amounts to the smooth addition of a small rotationally symmetric conical singularity with a shallow angle.  The notation established here is maintained throughout.

Let $\Gamma_0 \subset \mathbb{R}^3$ be the unit sphere. We fix a parametrization $\gamma_0(t) = (\gamma_{0,1}(t), \gamma_{0,2}(t))$, $0 \leq t \leq 1$, of its generating curve that satisfies the following requirements:

\begin{enumerate}
\item[A-1.]
$\gamma_{0,2}(0) = -1$ and $\gamma_{0,2}(1) = 1$;
\item[A-2.]
$\gamma_{0,1}$ and $\gamma_{0,2}$ are smooth;
\item[A-3.]
Near $t=0$ the parametrization is of the form of a graph: $\gamma_0(t)=(t, \gamma_{0,2}(t))$, $t\in[0,\frac{1}{5}]$;
\item[A-4.]
$|\gamma_0'(t)| \in[\frac{1}{4},4]$ for  $t\in[0,1]$. 
\end{enumerate}
\begin{remark}\label{rmk:beta}
With this parametrization, the Euler angle representing the tilt $\beta$ is a function of $t$. We denote this function by $\beta(t)$.  By A-3, $\gamma_{0,2}(t) = -\sqrt{1-t^2}$ for $t\in[0,\frac{1}{5}]$.
\end{remark}

\begin{figure}
  \centerline{\scalebox{0.45}{\includegraphics{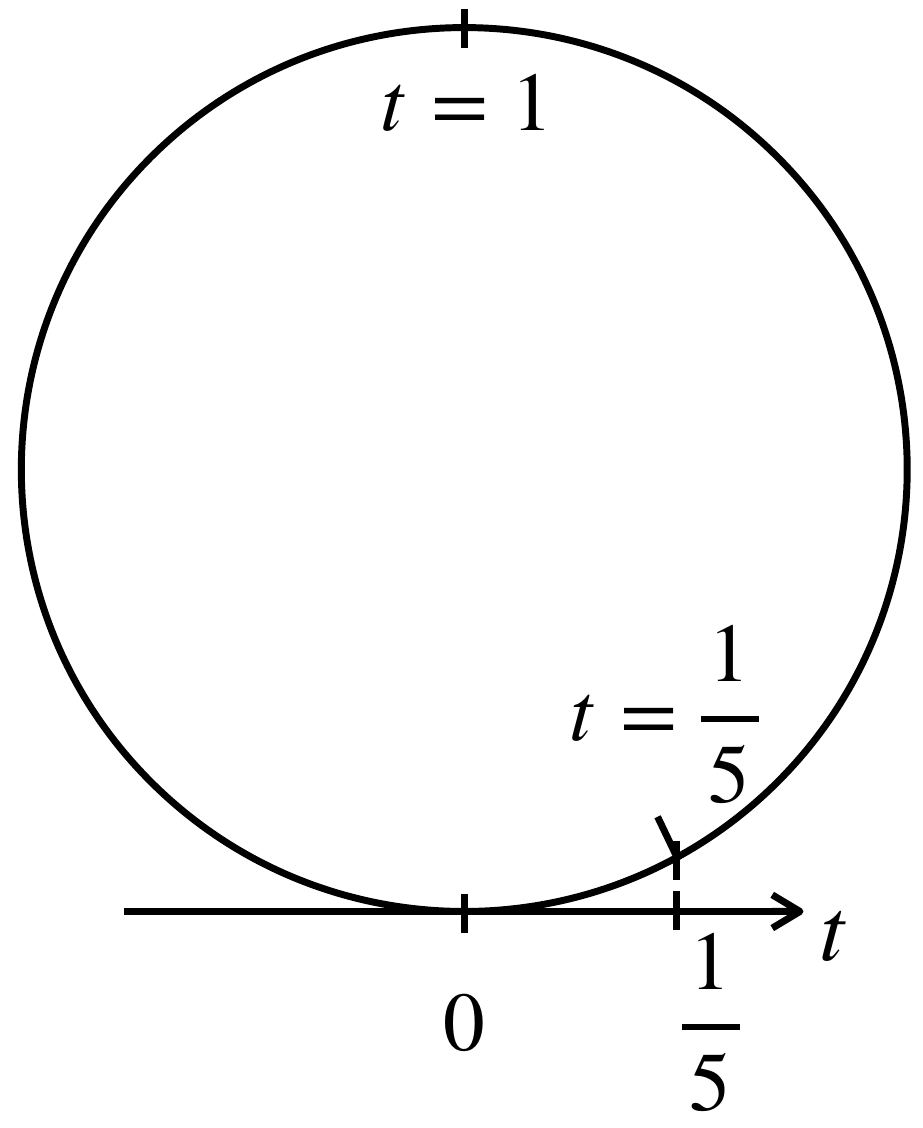}}}
  \caption{The profile of the unit sphere $\Gamma_0$.}
  \label{fig:gamma0}
\end{figure}

Next we perturb $\gamma_0$ in a small neighborhood of $t = 0$. For a given angle $\alpha$, we let $\varepsilon = \frac{1}{8}|\alpha-\frac{\pi}{2}|$. Let $c_0 > 0$ be so small that whenever $|\alpha-\frac{\pi}{2}| < c_0$, it holds that
\begin{equation}\label{eq:pertbd}
 6\varepsilon<\frac{1}{5},\quad \gamma_{0,2}'(6\varepsilon) = \frac{6 \epsilon}{\sqrt{1-(6\varepsilon)^2}}<|\!\cot\alpha| < \sqrt{8}, \text{ and } |\!\cot\alpha| \leq \frac{9}{8} \left|\alpha-\frac{\pi}{2}\right|.
\end{equation}

\begin{definition} \label{def:pert}
Let $\Gamma_0$ be the unit sphere with a parametrization satisfying Assumptions A-1 through A-4. Let  $\alpha$ and $\varepsilon$ satisfy $|\alpha -\frac{\pi}{2}|<c_0$ and $\varepsilon = \frac{1}{8}|\alpha -\frac{\pi}{2}|$. We say that $\Gamma$ is an $(\alpha,\varepsilon)$-perturbation of the unit sphere $\Gamma_0$ if its generating curve $\gamma$ satisfies
\begin{enumerate}
\item[B-1.]
$\gamma_1, \gamma_2 \in C^{\infty}([0,1])$;
\item[B-2.]
$\gamma(t) = \gamma_0(t)$ for $t\in[\varepsilon,1]$;
\item[B-3.]
$\gamma(t)=(t, \gamma_2(t))$ for $t\in[0,\varepsilon]$;
\item[B-4.]
$\gamma_2'(t) =  \cot(\alpha)$ for $t\in[0,\frac{\varepsilon}{2}]$;
\item[B-5.]
$|\gamma_2'(t)|\leq|\cot\alpha|$ for $t\in[\frac{\varepsilon}{2},\varepsilon]$;
\item[B-6.]
$|\gamma_2''(t)|\;\leq\; 40$ for $t\in[0,\varepsilon]$;
\end{enumerate}
\end{definition}

The definition implies that the curvature of $\gamma$ is uniformly bounded by $40$ and that $|\gamma'(t)| \in [\frac{1}{4},4]$ for $t\in[0,1]$. The surface $\Gamma$ obtained by revolving $\gamma$ has a conical singularity pointing outward when $0 < \alpha<\pi/2$, and a conical singularity pointing inward when $\pi/2 < \alpha<\pi$.

Note that $(\alpha,\varepsilon)$-perturbations of $\Gamma_0$ exist for every $\alpha$ satisfying $|\alpha-\frac{\pi}{2}|<c_0$. In particular, it is possible to satisfy B-6 because $\gamma_2'$ needs to change by at most $2|\cot\alpha| \leq \frac{9}{4}|\alpha-\frac{\pi}{2}|$ within the interval $[\varepsilon/2, \varepsilon]$ of length $\frac{\varepsilon}{2} = \frac{1}{16}|\alpha-\frac{\pi}{2}|$.

\begin{remark}
  The parameters $\varepsilon$ and $\alpha$ are tied through $\varepsilon = \frac{1}{8}|\alpha -\frac{\pi}{2}|$ in order to control the curvature of the perturbation.  Nevertheless, we distinguish the two parameters in our results in order to separate their roles in the analysis.
\end{remark}

\begin{figure}
  \centerline{\scalebox{0.45}{\includegraphics{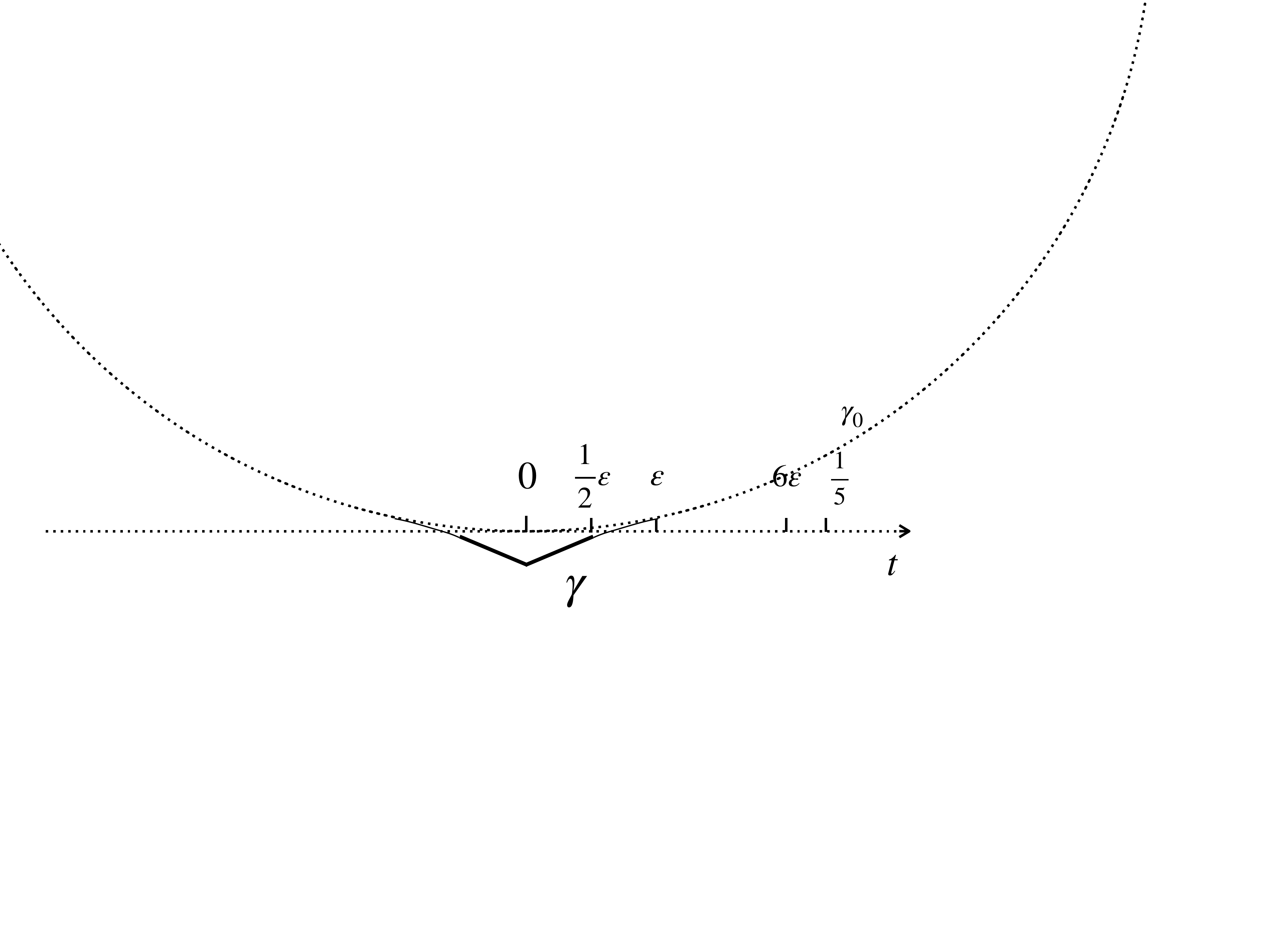}}}
  \caption{The profile of an $(\alpha,\varepsilon)$-perturbation $\Gamma$ of the unit sphere $\Gamma_0$.}
  \label{fig:gamma}
\end{figure}

\subsection{Approximate eigenpairs}\label{sec:approx}
In this section, for $n \geq 1$, let
\begin{equation} \label{eq:eigpair}
\lambda^n=\frac{1}{2n+1}, \quad f_n (t) = Y^n_n(\beta(t),0),
\end{equation}
where $\beta(t)$ is the function defined in Remark \ref{rmk:beta}.  This defines an exact eigenpair for $K_{\gamma_0}^n$.  The goal of this section is to show that, for infinitely many $n$, this is an approximate eigenpair for $K_{\gamma}^n$ for $(\alpha,\epsilon)$-perturbations with $|\alpha-\frac{\pi}{2}|$ sufficiently small. 

For convenience, we write
$$
d\mu_0(t):= \gamma_{0,1}(t)|\gamma'_0(t)| \, dt, \quad d\mu(t) = \gamma_1(t)|\gamma'(t)| \, dt.
$$
When $\Gamma$ is an $(\alpha,\varepsilon)$-perturbation of $\Gamma_0$, the norms of $L^2([0,1], d\mu)$ and $L^2([0,1], d\mu_0)$ are equivalent,
$$
\frac{1}{4^2}\|f\|_{L^2([0,1],d\mu_0)} \;\leq\; \|f\|_{L^2([0,1],d\mu)} \;\leq\; 4^2\|f\|_{L^2([0,1],d\mu_0)},
$$
Any operator $T$ on $L^2([0,1], d\mu)$, for example $T = K_\gamma^n$, can therefore be understood as an operator on $L^2([0,1], d\mu_0)$ of equivalent norm,
\begin{equation*}
 \frac{1}{4^4}  \|T \|_{\mu} 
 \;\leq\;  \|T \|_{\mu_0} 
 \;\leq\; 4^4  \|T \|_{\mu},
\end{equation*}
where $\|\!\cdot\!\|_{\mu} = \|\!\cdot\!\|_{L^2([0,1], d\mu)\to L^2([0,1], d\mu)}$ and $\|\!\cdot\!\|_{\mu_0} = \|\!\cdot\!\|_{L^2([0,1], d\mu)\to L^2([0,1], d\mu_0)}$.

In particular, we consider $K_{\gamma}^n -K_{\gamma_0}^n$ and $S_{\gamma}^n -S_{\gamma_0}^n$ as operators on $L^2([0,1],d\mu_0)$. Explicitly, 
\begin{equation*}
	(K_{\gamma}^n -K_{\gamma_0}^n )f (t) =  \int_0^1 \mathfrak{K}^n (t,t') f(t') \, d\mu_0 (t'), \quad
	(S_{\gamma}^n -S_{\gamma_0}^n )f(t) =  \int_0^1 \mathfrak{S}^n (t,t') f(t')\, d\mu_0 (t'),
\end{equation*}
where the kernels are given by
\begin{equation}\label{eq:kernelKS}
\mathfrak{K}^n (t,t') =  K_{\gamma}^n(t,t') \frac{\gamma_1(t')|\gamma'(t')|} {\gamma_{0,1}(t')|\gamma_0'(t')|}  - K_{\gamma_0}^n(t,t'), \quad 
\mathfrak{S}^n (t,t') =  S_{\gamma}^n(t,t')  \frac{\gamma_1(t')|\gamma'(t')|} {\gamma_{0,1}(t')|\gamma_0'(t')|}   - S_{\gamma_0}^n(t,t') .
\end{equation}

Similarly, we consider $K_{\Gamma} - K_{\Gamma_0}$ and $S_{\Gamma}  -S_{\Gamma_0}$ on $L^2(\Gamma_0) = L^2([0,1] \times [0, 2\pi), d\mu_0(t) \, d\theta)$,
\begin{equation*}
	\begin{aligned}
		(K_{\Gamma}  - K_{\Gamma_0} )f (\br_0(t,\theta))=  \int_{\Gamma_0} \mathfrak{K} (\br_0(t,\theta),\br_0(t',\theta')) f(\br_0(t',\theta')) \, d\mu_0(t) \, d\theta, \\
		(S_{\Gamma}  -S_{\Gamma_0} )f (\br_0(t,\theta)) =  \int_{\Gamma_0} \mathfrak{S}(\br_0(t,\theta),\br_0(t',\theta')) f(\br_0(t',\theta')) \, d\mu_0(t) \, d\theta.
	\end{aligned}
\end{equation*}
Their kernels are given by
\begin{equation*}
	\begin{aligned}
		\mathfrak{K} (\br_0(t,\theta),\br_0(t',\theta')) =  K_{\Gamma}(\br(t,\theta),\br(t',\theta'))  \frac{\gamma_1(t')|\gamma'(t')|} {\gamma_{0,1}(t')|\gamma_0'(t')|}  - K_{\Gamma_0}(\br_0(t,\theta),\br_0(t',\theta')) , \\
		\mathfrak{S} (\br_0(t,\theta),\br_0(t',\theta')) =  S_{\Gamma}(\br(t,\theta),\br(t',\theta'))  \frac{\gamma_1(t')|\gamma'(t')|} {\gamma_{0,1}(t')|\gamma_0'(t')|}  - S_{\Gamma_0}(\br_0(t,\theta),\br_0(t',\theta')) .
	\end{aligned}
\end{equation*}
Here $\br(t,\theta)$ and $\br_0(t,\theta)$ refer to the parametrizations of $\Gamma$ and $\Gamma_0$ induced by $\gamma$ and $\gamma_0$, respectively.

Let $\chi_1 + \chi_2 = \tilde\chi_1 + \tilde\chi_2  = 1$ be two partitions of unity of $[0,1]$ such that 
$\mathrm{supp}\,\chi_1 \subset [0,4\varepsilon]$, $\mathrm{supp}\,\chi_2\subset [3\varepsilon,1]$, $\mathrm{supp}\,\tilde\chi_1 \subset [0,2\varepsilon]$, and $\mathrm{supp}\,\tilde\chi_2\subset [\varepsilon,1]$,
as in Figure~\ref{fig:partition}. 
\begin{figure}
	\centerline{\scalebox{0.45}{\includegraphics{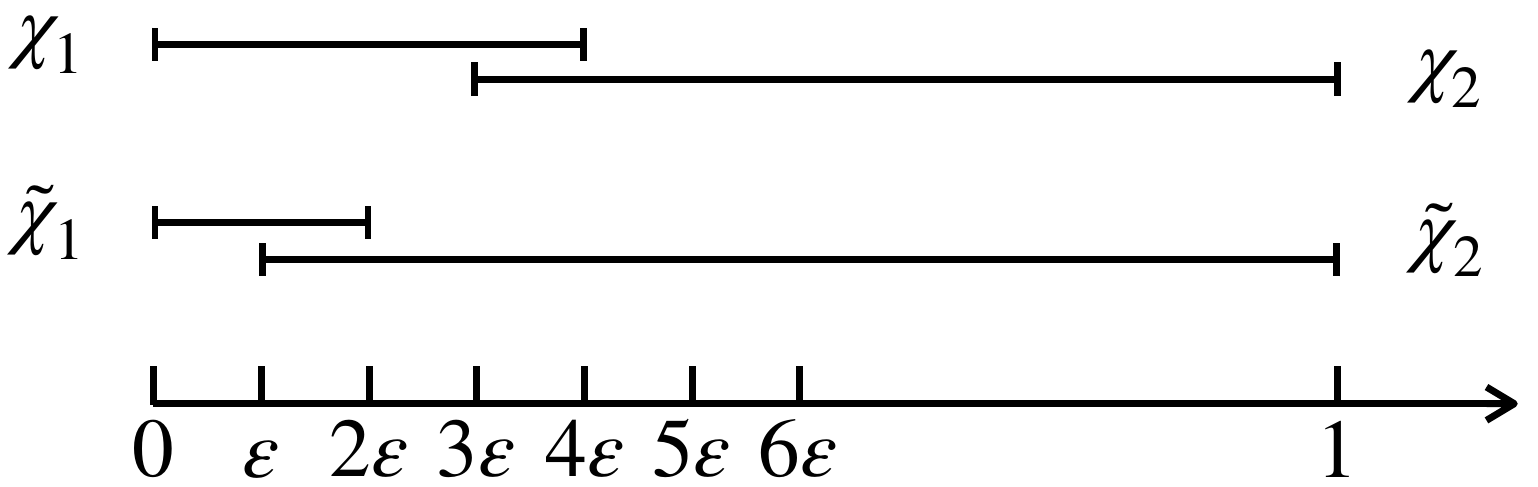}}}
	\caption{Two pairs of partition of unity}
	\label{fig:partition}
\end{figure}
Since $ \chi_2 \mathfrak{K}^n \tilde\chi_2 = 0$ and $ \chi_2 \mathfrak{S}^n \tilde\chi_2 = 0$, we have
\begin{equation}\label{eq:split}
\begin{aligned}
\mathfrak{K}^n &= \chi_1 \mathfrak{K}^n + \chi_2 \mathfrak{K}^n \tilde\chi_1,\\
\mathfrak{S}^n &= \chi_1 \mathfrak{S}^n + \chi_2 \mathfrak{S}^n \tilde\chi_1. 
\end{aligned}
\end{equation}

We now state two technical lemmas, deferring their proofs to Section~\ref{sec:est}. Given a kernel $\mathfrak{T}$, we also denote the operator it induces by $\mathfrak{T}$.
\begin{lemma} \label{lem:localest}
	There exists a constant $C > 0$ such that 
	for every $(\alpha,\varepsilon)$-perturbation $\Gamma$ of $\Gamma_0$, the operators $\chi_1 \mathfrak{K}^n$ and $\chi_1 \mathfrak{S}^n$ satisfy
	\begin{equation*}
	\| \chi_1 \mathfrak{K}^n \|_{\mu_0 } \;\leq\; C \frac{|\alpha - \pi/2| + \varepsilon}{n}, \quad 
	\|\chi_1 \mathfrak{S}^n \|_{\mu_0} \;\leq\; C \frac{\varepsilon}{n}.
	\end{equation*}
\end{lemma}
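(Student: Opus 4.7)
The plan is to apply Schur's test on $L^2([0,1], d\mu_0)$ after exploiting the support localization. Since $\gamma = \gamma_0$ on $[\varepsilon, 1]$ by B-2 and the associated weights agree there, both $\mathfrak{K}^n(t,t')$ and $\mathfrak{S}^n(t,t')$ vanish on $\{t,t' \in [\varepsilon, 1]\}$; combined with $\mathrm{supp}\,\chi_1 \subset [0, 4\varepsilon]$, the effective support is contained in $\{(t,t') : t \in [0, 4\varepsilon],\ \min(t,t') \leq \varepsilon\}$, so that at least one of $t, t'$ lies in the perturbation region $[0,\varepsilon]$.

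For $\chi_1\mathfrak{S}^n$, exploit that $\gamma_1 = \gamma_{0,1}$ throughout $[0, 4\varepsilon]$ (by A-3 and B-3) to decompose
\[
\mathfrak{S}^n(t,t') = \bigl[\,S^n_\gamma(t,t') - S^n_{\gamma_0}(t,t')\,\bigr]\frac{|\gamma'(t')|}{|\gamma_0'(t')|} + S^n_{\gamma_0}(t,t')\left[\frac{|\gamma'(t')|}{|\gamma_0'(t')|} - 1\right]
\]
on the support, so that the $\sqrt{\gamma_1(t)\gamma_1(t')}$ denominators coincide with the sphere's. The mean-value theorem reduces the Legendre-function difference to $(\chi_\gamma - \chi_0)\mathfrak{Q}'_{n-1/2}(\tilde\chi)/\sqrt{2\pi^3\gamma_1(t)\gamma_1(t')}$. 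From Definition~\ref{def:pert} I extract $|\gamma_2'| \leq |\cot\alpha| \lesssim \varepsilon$ on $[0,\varepsilon]$ and $|\gamma_{0,2}'| \lesssim \varepsilon$ there, so the weight correction $\bigl||\gamma'(t')|/|\gamma_0'(t')| - 1\bigr| \lesssim \varepsilon^2$, and a careful case analysis of $|\chi_\gamma - \chi_0|$ (distinguishing $t,t' \in [0,\varepsilon]$ from the mixed case $t \in [0,\varepsilon],\ t' \in [\varepsilon,1]$), combined with derivative asymptotics for $\mathfrak{Q}_{n-1/2}$ that parallel Lemma~\ref{lem:Qasymp}, produces a pointwise bound for $\mathfrak{S}^n$ in terms of $S^n_{\gamma_0}$ with a quantitative smallness factor. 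Splitting the Schur integrals $\sup_t \int |\mathfrak{S}^n(t,t')| \, d\mu_0(t')$ (and its dual) across the three regimes of Lemma~\ref{lem:Qasymp}---logarithmic for $n\delta < 1/2$, $(n\delta)^{-2}$ for $\delta < 2$, and $(n^2\delta^3)^{-1}$ for $\delta \geq 1$---and combining contributions yields the bound $\lesssim \varepsilon/n$.

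For $\chi_1\mathfrak{K}^n$, the same scheme applies using the modal formula~\eqref{eq:modalformula}. The new ingredient is the normal-direction prefactor $\gamma_2'(t)/(2\gamma_1(t)|\gamma'(t)|)$, whose value on $[0,\varepsilon]$ differs from the spherical one by $O(|\cot\alpha|) = O(|\alpha-\pi/2|)$ rather than by an $\varepsilon^2$-type correction; this contribution cannot be absorbed into an $\varepsilon$ factor and is what produces the $|\alpha-\pi/2|/n$ summand of the claimed bound. The additive term $|\gamma(t)-\gamma(t')| K_\Gamma(t,0,t',0) \mathfrak{R}_n(\chi)$ is handled analogously using the $\mathfrak{R}_n$ asymptotics of Lemma~\ref{lem:Qasymp}. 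Summing contributions yields $\|\chi_1\mathfrak{K}^n\|_{\mu_0} \lesssim (|\alpha-\pi/2| + \varepsilon)/n$.

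The main obstacle is uniformity near the cone vertex $t = 0$, where $\gamma_1(t) = t$ vanishes and the cross-ratio $\chi$ degenerates, especially in the crossover regime $t, t' \sim \varepsilon$ where $(\chi_\gamma - \chi_0)/(\chi_0 - 1)$ need not be small pointwise; control there must exploit the short integration range (an interval of length $\sim \varepsilon$) together with the $n$-dependent decay of the Legendre asymptotics. The uniform curvature bound $|\gamma_2''| \leq 40$ from Definition~\ref{def:pert} is essential to keep $|\gamma(t) - \gamma(t')|$ comparable to $|t - t'|$, so that the chord geometry remains regular enough for the Schur integrals in all three regimes to close with the claimed $n^{-1}$ decay uniformly over $(\alpha,\varepsilon)$-perturbations. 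This three-regime bookkeeping is the heart of what is deferred to Section~\ref{sec:est}.
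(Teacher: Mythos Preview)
The proposal has a genuine gap: the unweighted Schur test on $L^2([0,1],d\mu_0)$ does not close. The modal formula~\eqref{eq:modalformula} carries an explicit factor $\gamma_2'(t)/\gamma_1(t)$, which on $\mathrm{supp}\,\chi_1 \subset [0,4\varepsilon]$ is of exact order $\cot(\alpha)/t$ for the perturbed curve. A direct computation shows that the Schur integral $\int_0^{1/5}|\chi_1(t)K^n_\gamma(t,t')|\,d\mu_0(t')$ picks up, from the region $t' \in [2t,1/5]$, a contribution of size $\sim (|\cot\alpha|/n^2)\log(1/t)$, which diverges as $t \to 0$. Passing to the difference via the mean-value theorem does not cure this: for $t \in [0,\varepsilon/2]$ the prefactor discrepancy $\gamma_2'(t)/(2\gamma_1(t)|\gamma'(t)|) - \gamma_{0,2}'(t)/(2\gamma_{0,1}(t)|\gamma_0'(t)|)$ is still of order $|\cot\alpha|/t$, so the same logarithm persists. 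Your identification of the ``main obstacle'' as the crossover regime $t,t' \sim \varepsilon$ therefore misses the actual difficulty, which sits at $t \to 0$ with $t'$ bounded away from~$0$; neither the short integration range nor the $n$-dependent decay addresses it.

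The paper's fix is to conjugate by the weight $M(t)$ with $M(t)=t$ near~$0$ (equivalently, to run the Schur test on $L^2(d\mu_1)$ with $d\mu_1 \approx dt/t$ there). After this conjugation and the substitution $s^2 = (t-t')^2/(2tt')$, both Schur integrals collapse to a single $t$-independent quantity $\int_0^\infty(\mathfrak{Q}_{n-1/2}+|\mathfrak{R}_n|)(1+s^2)\,ds$, which is $\lesssim 1/n$ by the three regimes of Lemma~\ref{lem:Qasymp}. There is also a second simplification you are working around unnecessarily: for $\mathfrak{K}^n$ the paper never estimates the difference $K^n_\gamma - K^n_{\gamma_0}$ at all. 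Each term is bounded separately, the smallness factor $|\cot\alpha|+\varepsilon$ coming entirely from $|\gamma_2'(t)|\lesssim |\cot\alpha|+\varepsilon$ on $[0,4\varepsilon]$ (together with the uniform bound on $|\gamma(t)-\gamma(t')|K_\Gamma(t,0,t',0)$). In particular, no derivative asymptotics for $\mathfrak{Q}_{n-1/2}$ beyond Lemma~\ref{lem:Qasymp} are needed.
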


\begin{lemma} \label{lem:Schatten}
	For every $(\alpha,\varepsilon)$-perturbation $\Gamma$ of $\Gamma_0$, the operators $\chi_2 \mathfrak{K} \tilde{\chi}_1$ and $\chi_2 \mathfrak{S} \tilde{\chi}_1$ are in the Schatten class $S_p(L^2(\Gamma_0))$ for some $p < 1$.
\end{lemma}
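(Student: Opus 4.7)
The plan is to exploit the disjoint supports of the cutoffs and the resulting off-diagonal structure. Since $\mathrm{supp}\,\chi_2 \subset [3\varepsilon,1]$ and $\mathrm{supp}\,\tilde\chi_1 \subset [0,2\varepsilon]$, every pair $(t,t')$ in the product support has $t - t' \geq \varepsilon$. The first step is to transfer this to a uniform geometric lower bound $|\br(t,\theta) - \br(t',\theta')| \gtrsim \varepsilon$, uniform in $\theta, \theta'$. When both $t$ and $t'$ lie in $[0, 1/5]$, assumptions A-3 and B-3 give $\gamma_1(t) = t$ and $\gamma_1(t') = t'$, so
$$\gamma_1(t)^2 + \gamma_1(t')^2 - 2\gamma_1(t)\gamma_1(t')\cos(\theta-\theta') \;\geq\; (\gamma_1(t)-\gamma_1(t'))^2 \;\geq\; \varepsilon^2.$$
When $t \in [1/5, 1]$, the point $\br(t,\theta) = \br_0(t,\theta)$ lies at a fixed positive distance from the south-polar region containing $\br(t',\theta')$ (the apex of $\Gamma$ itself lies within $O(\varepsilon)$ of the south pole by B-3 through B-5), so $|\br - \br'| \gtrsim 1$ for $\varepsilon$ small enough.

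Once this lower bound is secured, the kernels $K_\Gamma(\br,\br')$, $S_\Gamma(\br,\br')$, $K_{\Gamma_0}(\br,\br')$, $S_{\Gamma_0}(\br,\br')$ are smooth functions of the pair $(\br, \br')$ on the relevant region, since the only potential singularity of these kernels is the diagonal. Crucially, $K_\Gamma$ involves the normal only at $\br$, and by B-2 the point $\br$ lies on the smooth part of $\Gamma$ where $\gamma = \gamma_0$, so the normal is $C^\infty$ there. The Jacobian factor $\gamma_1(t')|\gamma'(t')|/(\gamma_{0,1}(t')|\gamma_0'(t')|)$ reduces by A-3 and B-3 to $|\gamma'(t')|/|\gamma_0'(t')|$ on $[0,\varepsilon]$, which is smooth and bounded. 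Choosing the partitions $\chi_i, \tilde\chi_i$ to be $C^\infty$, the kernels $\chi_2 \mathfrak{K} \tilde\chi_1$ and $\chi_2 \mathfrak{S} \tilde\chi_1$ are $C^\infty$ functions on the smooth compact manifold $\Gamma_0 \times \Gamma_0$.

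The final step is the standard observation that a $C^\infty$ kernel on a smooth compact $2$-manifold defines an operator whose singular values decay faster than any polynomial, whence it lies in every Schatten class $S_p(L^2(\Gamma_0))$ with $p > 0$; in particular some (in fact every) $p < 1$ suffices. The main obstacle is not the Schatten bound itself but the verification of intrinsic smoothness at the collapsed loci of the parametrization: $t' = 0$, where the cone apex of $\Gamma$ is identified with the south pole of $\Gamma_0$, and $t = 1$, the north pole. Near $t' = 0$, the map $\br_0(t',\theta') \mapsto \br(t',\theta')$ alters only the vertical coordinate by the smooth increment $\gamma_2(t') - \gamma_{0,2}(t')$, and so is a smooth diffeomorphism of neighborhoods of the polar points; hence the composed kernel is smooth across the apex. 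Near $t = 1$, B-2 gives $\br = \br_0$ identically, and smoothness reduces to the smoothness of $K_{\Gamma_0}$ and $S_{\Gamma_0}$ away from the diagonal on $\Gamma_0$.
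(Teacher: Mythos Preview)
Your argument has a genuine gap at precisely the point you flag as the main obstacle: the kernel is \emph{not} $C^\infty$ on $\Gamma_0 \times \Gamma_0$ near the south pole in the second variable.

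The claim that ``the map $\br_0(t',\theta') \mapsto \br(t',\theta')$ alters only the vertical coordinate by the smooth increment $\gamma_2(t') - \gamma_{0,2}(t')$, and so is a smooth diffeomorphism'' confuses smoothness in the radial parameter $t'$ with intrinsic smoothness on the sphere. In Euclidean coordinates $(u,v)$ near the south pole of $\Gamma_0$ one has $t' = \sqrt{u^2+v^2}$, and the map in question is
\[
(u,v) \;\longmapsto\; \bigl(u,\,v,\,\gamma_2(\sqrt{u^2+v^2})\bigr).
\]
By B-4, $\gamma_2(t') = \gamma_2(0) + (\cot\alpha)\, t'$ for $t'\in[0,\varepsilon/2]$, while $\gamma_{0,2}'(0)=0$. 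Hence the vertical increment equals $(\cot\alpha)\sqrt{u^2+v^2}$ plus a smooth function of $(u,v)$; since $\alpha\neq\pi/2$, this is Lipschitz but not $C^1$ at the origin. This is of course exactly the conical singularity: if the map were a smooth diffeomorphism, $\Gamma$ would be smooth at the apex. Consequently the kernels $\chi_2 K_\Gamma \tilde\chi_1$ and $\chi_2 S_\Gamma \tilde\chi_1$, viewed on $\Gamma_0\times\Gamma_0$, involve $\sqrt{u^2+v^2}$ and are not $C^\infty$; your final ``standard observation'' does not apply.

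The paper's proof addresses exactly this issue: it observes that the kernels have the form $G(x,y,u,v,\sqrt{u^2+v^2})$ with $G$ smooth, Taylor-expands in the last variable, and shows the problematic linear term $G_1(x,y,u,v)\sqrt{u^2+v^2}$ lies in $H^\mu(\mathbb{R}^4)$ for all $\mu<2$. Combined with the Delgado--Ruzhansky criterion (kernels in $H^\mu$ with $\mu>1$ on a compact $2$-manifold give operators in $S_p$ for $p>\frac{4}{2+2\mu}$), this yields $p<1$. Your off-diagonal separation argument and the smoothness in the first variable are correct and are also used in the paper; what is missing is this finite-regularity analysis in the second variable near the apex.
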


As a consequence of Lemmas~\ref{lem:localest} and \ref{lem:Schatten}, we have the following.

\begin{lemma}\label{lemma:keyKS0}
There exists a constant $C>0$ satisfying the following.
For every $(\alpha,\varepsilon)$-perturbation $\Gamma$ of $\Gamma_0$ there exists an infinite set $Z \subset \mathbb{N}$ such that 
 \begin{equation*}
 \begin{aligned}
& \| K_{\gamma}^n -K_{\gamma_0}^n  \|_{\mu_0} \;\leq\;  C\frac{|\alpha-\pi/2| + \varepsilon }{n}, \\
& \| S_{\gamma}^n -S_{\gamma_0}^n \|_{\mu_0} \;\leq\; C \frac{ \varepsilon }{n},
\end{aligned}
\end{equation*}
for every $n \in Z$.
\end{lemma}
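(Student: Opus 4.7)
My plan is to combine the near-diagonal estimate of Lemma~\ref{lem:localest}, which already gives the desired $1/n$ rate uniformly in $n$, with the off-diagonal Schatten estimate of Lemma~\ref{lem:Schatten}, which will force the remaining off-diagonal piece to match that rate along an infinite subsequence of Fourier modes. First I would invoke the splitting \eqref{eq:split},
$$K_\gamma^n - K_{\gamma_0}^n \;=\; \chi_1 \mathfrak{K}^n + \chi_2 \mathfrak{K}^n \tilde\chi_1,$$
together with the analogous identity for $S_\gamma^n - S_{\gamma_0}^n$. Lemma~\ref{lem:localest} controls the first summand uniformly in $n$ with a universal constant, so it suffices to show that the off-diagonal pieces $\chi_2 \mathfrak{K}^n \tilde\chi_1$ and $\chi_2 \mathfrak{S}^n \tilde\chi_1$ obey bounds of the same shape for infinitely many $n$.

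Next I would pass to the full surface. Because $\chi_1, \chi_2, \tilde\chi_1$ depend only on the parameter $t$ and the kernels $\mathfrak K, \mathfrak S$ are rotationally invariant (both $\Gamma$ and $\Gamma_0$ are surfaces of revolution), the truncated operators $\chi_2 \mathfrak{K} \tilde\chi_1$ and $\chi_2 \mathfrak{S} \tilde\chi_1$ on $L^2(\Gamma_0)$ respect the unitary decomposition $L^2(\Gamma_0) \simeq \bigoplus_{n} L^2([0,1], d\mu_0)$, with $n$th blocks exactly $\chi_2 \mathfrak{K}^n \tilde\chi_1$ and $\chi_2 \mathfrak{S}^n \tilde\chi_1$. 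Since the singular-value multiset of a direct sum of compact operators is the disjoint union of the summand multisets, Lemma~\ref{lem:Schatten} furnishes a $p < 1$ with
$$\sum_{n \in \mathbb{Z}} \|\chi_2 \mathfrak{K}^n \tilde\chi_1\|_{\mu_0}^p \;\leq\; \|\chi_2 \mathfrak{K} \tilde\chi_1\|_{S_p(L^2(\Gamma_0))}^p \;<\; \infty,$$
and likewise with $\mathfrak S$ in place of $\mathfrak K$.

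Finally I would extract the infinite set by a pigeonhole argument that exploits $p < 1$. Denote the constant of Lemma~\ref{lem:localest} by $C_1$ and set
$$d_n \;=\; \frac{\|\chi_2 \mathfrak{K}^n \tilde\chi_1\|_{\mu_0}}{|\alpha - \pi/2| + \varepsilon} \,+\, \frac{\|\chi_2 \mathfrak{S}^n \tilde\chi_1\|_{\mu_0}}{\varepsilon}.$$
Using the inequality $(x+y)^p \leq x^p + y^p$ valid for $0 < p < 1$, the previous step yields $\sum_n d_n^p < \infty$. If $d_n > C_1/n$ were to hold for every sufficiently large $n$, then the tail of $\sum_n d_n^p$ would dominate $C_1^p \sum_n n^{-p} = \infty$, a contradiction. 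Hence $Z := \{n \in \mathbb{N} : d_n \leq C_1/n\}$ is infinite, and on $Z$ one has simultaneously $\|\chi_2 \mathfrak{K}^n \tilde\chi_1\|_{\mu_0} \leq C_1(|\alpha - \pi/2| + \varepsilon)/n$ and $\|\chi_2 \mathfrak{S}^n \tilde\chi_1\|_{\mu_0} \leq C_1\varepsilon/n$. The triangle inequality together with Lemma~\ref{lem:localest} then delivers both claimed bounds on $Z$ with final constant $C = 2 C_1$.

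The only real obstacles lie in verifying the two inputs, Lemmas~\ref{lem:localest} and~\ref{lem:Schatten}; the rest is soft analysis. The strict inequality $p < 1$ is essential, since $\sum_n n^{-p} = \infty$ is precisely what converts $\ell^p$-summability into the quantitative $1/n$ rate along an infinite subsequence; a weaker hypothesis such as Hilbert--Schmidt would yield only $d_n \to 0$ on a subsequence and not the rate demanded by the statement.
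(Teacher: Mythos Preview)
Your proposal is correct and follows essentially the same approach as the paper: split via \eqref{eq:split}, handle $\chi_1\mathfrak{K}^n$ and $\chi_1\mathfrak{S}^n$ by Lemma~\ref{lem:localest}, observe that the block norms $\|\chi_2\mathfrak{K}^n\tilde\chi_1\|_{\mu_0}$ are among the singular values of the full operator and hence lie in $\ell^p$ for some $p<1$ by Lemma~\ref{lem:Schatten}, and finally use divergence of $\sum n^{-p}$ to extract the infinite set $Z$. Your use of the combined quantity $d_n$ to secure both bounds simultaneously is a slightly cleaner bookkeeping device than the paper's, but the argument is the same.
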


\begin{proof}
Every norm $\| \chi_2 \mathfrak{K}^n \tilde\chi_1\|_{\mu_0}$ appears as a singular value of $\chi_2 \mathfrak{K} \tilde\chi_1$, an operator on $L^2(\Gamma_0) = L^2([0,1] \times [0, 2\pi), d\mu_0 \, d\theta)$. The same statement is also true with $\mathfrak{S}$ in place of $\mathfrak{K}$. From Lemma~\ref{lem:Schatten} we thus know that there is a $p < 1$ such that
$$
\sum_n \| \chi_2 \mathfrak{K}^n \tilde\chi_1\|_{\mu_0}^p <\infty, \quad  \sum_n \| \chi_2 \mathfrak{S}^n \tilde\chi_1\|_{\mu_0}^p <\infty.
$$
In particular, there are infinitely many $n$ for which
 $$
 \| \chi_2 \mathfrak{K}^n \tilde\chi_1\|_{\mu_0} <\frac{1}{n^{\frac{1}{p}}}, \quad \| \chi_2 \mathfrak{S}^n \tilde\chi_1\|_{\mu_0} <\frac{1}{n^{\frac{1}{p}}}.
$$
The lemma now follows from \eqref{eq:split} and Lemma~\ref{lem:localest}.
\end{proof}

We can now prove the main result of this section. In the statement, $\lambda^n$ and $f_n$ are as in \eqref{eq:eigpair}, so that $K^n_{\gamma_0} f_n = \lambda^n f_n$.
\begin{lemma}\label{lemma:approxevalue}
There exist $c \in(0,\frac{\pi}{2})$ and $C>0$ such that, 
for every $(\alpha,\varepsilon)$-perturbation $\Gamma$ of~$\Gamma_0$ with $|\alpha -\frac{\pi}{2}|<c$, there exist infinitely many $n$ for which
\begin{equation*}
\|(K_{\gamma}^n - \lambda^n) f_n\|_{S^n_\gamma} \;\leq\;   C\frac{|\alpha-\pi/2|+\varepsilon}{n}  \| f_n\|_{S^n_\gamma}.
\end{equation*}
\end{lemma}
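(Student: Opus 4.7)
The plan is to reduce the desired inequality to the $L^2(d\mu_0)$-bounds already provided by Lemma~\ref{lemma:keyKS0}, paying the price of $n^{-1/2}$ when we pass from $L^2(d\mu_0)$ to $S^n_\gamma$-norms and the reciprocal price $n^{1/2}$ when we compare $\|f_n\|_{L^2(d\mu_0)}$ to $\|f_n\|_{S^n_\gamma}$. The two gains cancel out, leaving precisely the bound $(|\alpha-\pi/2|+\varepsilon)/n$. The special identity $S^n_{\gamma_0}=2K^n_{\gamma_0}$ on the sphere, combined with the spectrum described in Section~\ref{sec:spherespec}, is what makes both exchanges quantitatively sharp.

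Concretely, since $K^n_{\gamma_0}f_n=\lambda^n f_n$, we have $(K^n_\gamma-\lambda^n)f_n=(K^n_\gamma-K^n_{\gamma_0})f_n$, so it suffices to estimate the $S^n_\gamma$-norm of this difference. For the first exchange, I would write $\|h\|^2_{S^n_\gamma}=\langle S^n_\gamma h,h\rangle\le\|S^n_\gamma\|_{\mu_0}\|h\|^2_{L^2(d\mu_0)}$ and then split $S^n_\gamma=S^n_{\gamma_0}+(S^n_\gamma-S^n_{\gamma_0})$: the first summand has operator norm exactly $2\lambda^n=2/(2n+1)$ because the eigenvalues of $K^n_{\gamma_0}$ are $1/(2\ell+1)$ for $\ell\ge n$, and the second summand is $O(\varepsilon/n)$ for $n$ in the infinite set $Z\subset\mathbb{N}$ given by Lemma~\ref{lemma:keyKS0}. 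This yields $\|h\|_{S^n_\gamma}\lesssim n^{-1/2}\|h\|_{L^2(d\mu_0)}$. Applying this with $h=(K^n_\gamma-K^n_{\gamma_0})f_n$ and invoking the $K$-estimate of Lemma~\ref{lemma:keyKS0} for $n\in Z$ gives
\begin{equation*}
\|(K^n_\gamma-\lambda^n)f_n\|_{S^n_\gamma}\;\lesssim\;n^{-1/2}\cdot\frac{|\alpha-\pi/2|+\varepsilon}{n}\,\|f_n\|_{L^2(d\mu_0)}.
\end{equation*}

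The reverse exchange is the step that drives the choice of the constant $c$. Using $S^n_{\gamma_0}=2K^n_{\gamma_0}$ and the eigenvalue identity, $\|f_n\|^2_{S^n_{\gamma_0}}=2\lambda^n\|f_n\|^2_{L^2(d\mu_0)}$. Then
\begin{equation*}
\|f_n\|^2_{S^n_\gamma}\;\geq\;\|f_n\|^2_{S^n_{\gamma_0}}-\|S^n_\gamma-S^n_{\gamma_0}\|_{\mu_0}\|f_n\|^2_{L^2(d\mu_0)}\;\geq\;\bigl(2\lambda^n-C\varepsilon/n\bigr)\|f_n\|^2_{L^2(d\mu_0)}.
\end{equation*}
Choosing $c$ small enough so that $C\varepsilon/n\le \lambda^n$ for all $n\ge1$ (which works because $\varepsilon=\tfrac{1}{8}|\alpha-\pi/2|<c/8$ and $\lambda^n\approx 1/(2n)$), one obtains $\|f_n\|_{L^2(d\mu_0)}\lesssim n^{1/2}\|f_n\|_{S^n_\gamma}$. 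Substituting this into the previous display produces the claimed inequality for every $n\in Z$, which is an infinite set.

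The principal obstacle is this last reverse exchange: it requires that $S^n_\gamma$ not degenerate too much on the eigenfunction $f_n$, and this is precisely what forces $\varepsilon$ (hence $|\alpha-\pi/2|$) to be small, independent of $n$. The $n$-dependent eigenvalue $\lambda^n\asymp 1/n$ must dominate the $\varepsilon/n$ perturbation of the single layer uniformly in $n$, which is why the hypothesis $|\alpha-\pi/2|<c$ appears. Everything else is elementary linear algebra once Lemma~\ref{lemma:keyKS0} and the sphere identity $S^n_{\gamma_0}=2K^n_{\gamma_0}$ are in hand.
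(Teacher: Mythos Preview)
Your proposal is correct and follows essentially the same approach as the paper's proof: both split $S^n_\gamma = S^n_{\gamma_0} + (S^n_\gamma - S^n_{\gamma_0})$, use $\|S^n_{\gamma_0}\| = 2/(2n+1)$ together with Lemma~\ref{lemma:keyKS0} to pass from $\|\cdot\|_{S^n_\gamma}$ to $L^2$, and then reverse the comparison on $f_n$ via the eigenfunction identity $S^n_{\gamma_0}f_n = 2\lambda^n f_n$, choosing $c$ small so that the $\varepsilon/n$ perturbation is absorbed. One cosmetic point: the $S^n_\gamma$-norm is defined with the $d\mu$ inner product while you write several estimates in $d\mu_0$; the paper avoids mixing by computing $\langle S^n_{\gamma_0}f_n,f_n\rangle_\mu = 2\lambda^n\|f_n\|^2_\mu$ directly (valid since $f_n$ is an eigenfunction), which is the cleanest way to justify your reverse inequality.
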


\begin{proof}
Let $\langle\cdot,\cdot \rangle_\mu$ denote the inner product of $L^2([0,1],d\mu)$. By Lemma~\ref{lemma:keyKS0}, there is a set $Z = Z(\Gamma) \subset \mathbb{N}$ such that
\begin{equation} \label{eq:approxeigchain}
\begin{aligned}
&\|(K_{\gamma}^n - \lambda^n) f_n\|_{S^n_\gamma}^2  = \langle S_{\gamma}^n (K_{\gamma}^n -K_{\gamma_0}^n) f_n,(K_{\gamma}^n - K_{\gamma_0}^n) f_n\rangle_\mu \leq \\
& \|S_{\gamma_0}^n (K_{\gamma}^n -K_{\gamma_0}^n) f_n\|_\mu \|(K_{\gamma}^n - K_{\gamma_0}^n) f_n\|_\mu +  \|( S_{\gamma}^n -S_{\gamma_0}^n) (K_{\gamma}^n -K_{\gamma_0}^n) f_n\|_\mu \|(K_{\gamma}^n - K_{\gamma_0}^n) f_n\|_\mu \lesssim \\
& \frac{1}{n} \| (K_{\gamma}^n -K_{\gamma_0}^n) f_n \|^2_\mu \lesssim \frac{(|\alpha-\pi/2| + \varepsilon)^2}{n^3} \|f_n \|^2_\mu
\end{aligned}
\end{equation}
whenever $n \in Z$. In addition to Lemma~\ref{lemma:keyKS0}, we have here made use the fact that
$$
\|S_{\gamma_0}^n\|_{\mu} \;\leq\; 4^4 \| S_{\gamma_0}^n \|_{\mu_0} = 4^4 \frac{2}{2n+1},
$$
which follows from the fact that $S_{\gamma_0}^n \colon L^2([0,1],d\mu_0) \to L^2([0,1],d\mu_0)$ is a non-negative operator with $2/(2n+1)$ as its largest eigenvalue (Section~\ref{sec:spherespec}).

On the other hand, for $n \in Z$,
\begin{align*}
\frac{2}{2n+1}  \|f_n \|^2_\mu &=   \langle S_{\gamma_0}^n f_n, f_n\rangle_{\mu} \leq \langle S_{\gamma}^n f_n ,f_n\rangle_{\mu} +  
\|(S_{\gamma_0}^n  - S_{\gamma}^n) f_n\|_{\mu} \|f_n\|_\mu \\
&\lesssim \langle S_{\gamma}^n f_n ,f_n\rangle_{\mu} +   \frac{\varepsilon}{n} \|f_n\|^2.
\end{align*}
Here we have recalled that $ \left< S_{\gamma}^n f_n ,f_n\right>  = \|f_n\|^2_{S_\gamma^n}\geq 0$. If $|\alpha - \frac{\pi}{2}|$, and thus $\varepsilon$, is sufficiently small, we conclude that
\begin{equation*}
\|f_n \|^2_{\mu} \lesssim n \|f_n\|^2_{S_\gamma^n}, \quad n \in Z.
\end{equation*}
Inserting this estimate into \eqref{eq:approxeigchain} yields the desired conclusion.
\end{proof}

\subsection{Proof of Theorem~\ref{thm:infemb}}
We can now give the proof of Theorem~\ref{thm:infemb}. Recall that the proofs of Lemmas~\ref{lem:Qasymp}, \ref{lem:essest}, \ref{lem:localest}, and \ref{lem:Schatten} have been deferred to the next section.
\begin{proof}
	Let $c$ and $C$ be as in Lemma~\ref{lemma:approxevalue}, and let $\Gamma$ be an $(\alpha, \varepsilon)$-perturbation of $\Gamma_0$ with $0 < \frac{\pi}{2} - \alpha < c$. Then Lemma~\ref{lemma:approxevalue} shows that there is an infinite set $Z = Z(\Gamma) \subset \mathbb{N}$ for which 
	\begin{equation*}
	\mathrm{dist}(\sigma(K_{\gamma}^n,  \mathcal{E}_n ),  1/(2n+1)) \leq C \frac{\pi/2 - \alpha+\varepsilon}{n}, \quad n \in Z.
	\end{equation*}
By Lemma~\ref{lem:essest}, we also know that the essential spectral radius of $K_{\gamma}^n \colon \mathcal{E}_n \to \mathcal{E}_n$ satisfies that
$$|\sigma_{n,\alpha}| \leq C_0 \frac{\pi/2 - \alpha}{n} $$
for some constant $C_0$.
Hence, if $\frac{\pi}{2} - \alpha$, and therefore $\varepsilon$, is chosen sufficiently small, $K^n_\gamma \colon \mathcal{E}_n \to \mathcal{E}_n$ must for every $n \in Z$ have an eigenvalue $z_n$ satisfying
$$|\sigma_{n,\alpha}| < \frac{1}{2n+1} - C \frac{\pi/2 - \alpha+\varepsilon}{n} \leq z_n \leq \frac{1}{2n+1} + C \frac{\pi/2 - \alpha+\varepsilon}{n}.$$
Every $z_n$ is an eigenvalue of $K_\Gamma \colon \mathcal{E} \to \mathcal{E}$, infinitely many of which are embedded in
\begin{equation*}
\sigma_{\ess}(K_\Gamma, \mathcal{E}) = [0, |\sigma_{0, \alpha}|]. \qedhere
\end{equation*}
\end{proof}
\begin{remark}
	By instead considering $(\alpha, \varepsilon)$-perturbations with $\frac{\pi}{2} < \alpha < \pi$, the same argument yields examples $\Gamma$ where $K_\Gamma \colon \mathcal{E} \to \mathcal{E}$ exhibits infinitely many eigenvalues outside its essential spectrum, which is negative in this case. Such an example has previously been observed numerically in \cite[Section~7.3.3]{HelsingPerfekt2018a}.
\end{remark}
\section{Kernel and operator estimates} \label{sec:est}

This section contains the proofs of Lemmas \ref{lem:Qasymp}, \ref{lem:essest}, \ref{lem:localest}, and \ref{lem:Schatten}.

\subsection{Proof of Lemma~\ref{lem:Qasymp}} \label{app:Qasymp}
\begin{proof}
We first consider
\begin{align*}
 \mathfrak{Q}_{n-1/2}(1+\delta^2) 
 = \int_{-\pi}^\pi \frac{e^{in\theta} \, d\theta}{\sqrt{\delta^2 + (1-\cos(\theta))}}
 = \int_{-\pi}^\pi \frac{\cos(n\theta)\, d\theta}{\sqrt{\delta^2 + (1-\cos(\theta))}}\,.
\end{align*}
Integration by parts once, twice, and three times gives
\begin{equation}\label{eq:IBPQ}
\begin{aligned}
 \mathfrak{Q}_{n-1/2}(1+\delta^2) 
 &= \frac{1}{2n} \int_{-\pi}^{\pi} \frac{\sin(ny)\sin(y)\, dy}{(\delta^2 + (1-\cos(y)))^{3/2}}\\
 &= \frac{1}{2n^2} \int_{-\pi}^{\pi} \frac{\cos(ny)\cos(y)\, dy}{(\delta^2 + (1-\cos(y)))^{3/2}} - \frac{3}{4n^2} \int_{-\pi}^{\pi} \frac{\cos(ny)\sin^2(y)\, dy}{(\delta^2 + (1-\cos(y)))^{5/2}}\\
 &= \frac{1}{2n^3} \int_{-\pi}^{\pi} \frac{\sin(ny)\sin(y)\, dy}{(\delta^2 + (1-\cos(y)))^{3/2}} 
 +  \frac{3}{4n^3} \int_{-\pi}^{\pi} \frac{\sin(ny)\cos(y)\sin(y)\, dy}{(\delta^2 + (1-\cos(y)))^{5/2}}\,+ \\
 &\quad +\,  \frac{3}{2n^3} \int_{-\pi}^{\pi} \frac{\sin(ny)\sin(y)\cos(y)\, dy}{(\delta^2 + (1-\cos(y)))^{5/2}} 
 -  \frac{15}{8n^3} \int_{-\pi}^{\pi} \frac{\sin(ny)(\sin(y))^3\, dy}{(\delta^2 + (1-\cos(y)))^{7/2}} \,.
\end{aligned}
\end{equation}
From the third line of \eqref{eq:IBPQ} we see that
$$
\mathfrak{Q}_{n-1/2}(1+\delta^2) \;\lesssim\; \frac{1}{n^3 \delta^3} +  \frac{1}{n^3 \delta^5} +  \frac{1}{n^3 \delta^7}\,.
$$
Thus, for $\delta \geq \cone$, we obtain that
\begin{equation}\label{eq:q1}
\mathfrak{Q}_{n-1/2}(1+\delta^2) \;\lesssim\; \frac{1}{n^3 \delta^3} \leq \frac{1}{n^2 \delta^3}.
\end{equation}
Rescaling the integrals in the third line of \eqref{eq:IBPQ}, $y\rightarrow y/n$, yields
\begin{equation*}
\begin{aligned}
 \mathfrak{Q}_{n-1/2}(1+\delta^2) =
 &\frac{1}{2n^3} \int_{-\pi n}^{\pi n} \frac{\sin(y)\sin(y/n)\, dy/n}{(\delta^2 + (1-\cos(y/n)))^{3/2}} 
 +  \frac{3}{4n^3} \int_{-\pi n}^{\pi n} \frac{\sin(y)\cos(y/n)\sin(y/n)\, dy/n}{(\delta^2 + (1-\cos(y/n)))^{5/2}} \\
 &+  \frac{3}{2n^3} \int_{-\pi n}^{\pi n} \frac{\sin(y)\sin(y/n)\cos(y)\, dy/n}{(\delta^2 + (1-\cos(y/n)))^{5/2}} 
 -  \frac{15}{8n^3} \int_{-\pi n}^{\pi n} \frac{\sin(y)(\sin(y/n))^3\, dy/n}{(\delta^2 + (1-\cos(y/n)))^{7/2}} \,.
\end{aligned}
\end{equation*}
Since $1-\cos(y/n) \gtrsim (y/n)^2$, for all $n$ and $y\in(-\pi n,\pi n)$, we obtain that for $\delta>0$,
\begin{equation}\label{eq:q22}
\begin{aligned}
 \mathfrak{Q}_{n-1/2}(1+\delta^2) &\;\lesssim\;
 \frac{1}{n^3} \int_{-\infty}^{\infty} \frac{ dy/n}{(\delta^2 + (y/n)^2)^{3/2}} 
 +  \frac{1}{n^3} \int_{-\infty}^{\infty} \frac{|y|/n\, dy/n}{(\delta^2 +(y/n)^2))^{5/2}}\;+ \\
 &\qquad +  \frac{1}{n^3} \int_{-\infty}^{\infty} \frac{|y|/n \, dy/n}{(\delta^2 + (y/n)^2))^{5/2}} 
 +  \frac{1}{n^3} \int_{-\infty}^{\infty} \frac{(|y|/n)^3\, dy/n}{(\delta^2 +(y/n)^2))^{7/2}} \\
 &\;\lesssim\; \frac{1}{n^3\delta^2} +  \frac{1}{n^3\delta^3} \,.
\end{aligned}
\end{equation}

Scaling instead the second line of \eqref{eq:IBPQ}, we obtain
\begin{align*}
 \mathfrak{Q}_{n-1/2}(1+\delta^2) 
\;=\;  \frac{1}{2n^3} \int_{-\pi n}^{\pi n} \frac{\cos(y)\cos(y/n)\, dy}{(\delta^2 + (1-\cos(y/n)))^{3/2}} - \frac{3}{4n^3} \int_{-\pi n}^{\pi n} \frac{\cos(y)\sin^2(y/n)\, dy}{(\delta^2 + (1-\cos(y/n)))^{5/2}}\,.
\end{align*}
Thus for $ \delta>0$,
\begin{equation*}
 \mathfrak{Q}_{n-1/2}(1+\delta^2)  \;\lesssim\; \frac{1}{n^3} \int_{-\infty}^{\infty} \frac{dy}{(\delta^2 + (y/n)^2)^{3/2}}  +  \frac{1}{n^3} \int_{-\infty}^{\infty} \frac{(y/n)^2 \, dy}{(\delta^2 + (y/n)^2)^{5/2}}
\;\approx\; \frac{1}{(n\delta)^2}\,.
\end{equation*}

Scaling the first line of \eqref{eq:IBPQ}, we obtain
\begin{align*}
 \mathfrak{Q}_{n-1/2}(1+\delta^2) 
 = \frac{1}{2n^2} \int_{-\pi n}^{\pi n} \frac{\sin(y)\sin(y/n)\, dy}{(\delta^2 + (1-\cos(y/n)))^{3/2}}.
\end{align*}
Thus for $n\delta < 1/2$,
\begin{equation}\label{eq:q3}
\begin{aligned}
 \mathfrak{Q}_{n-1/2}(1+\delta^2)  &\lesssim\;
 \frac{1}{n^2} \bigg(\int_{-1}^{1} + \int_{|y| > 1} \bigg) \left|  \frac{\sin(y)\sin(y/n)}{(\delta^2 + (y/n)^2)^{3/2}} \right|\, dy\\
&\leq\; \frac{1}{n^2} \int_{-1}^{1}\frac{ y^2/n \, dy}{(\delta^2 + (y/n)^2)^{3/2}}  \;+\; \frac{1}{n^2}  \int_{|y| > 1} \frac{ |y|/n \, dy}{(\delta^2 + (y/n)^2)^{3/2}} \\
&=\; \int_{-1}^{1} \frac{y^2 \, dy}{((n\delta)^2 + y^2)^{3/2}} + \int_{|y| > 1} \frac{|y| \, dy}{((n\delta)^2 + y^2)^{3/2}} \\
&\approx\; \log \frac{1}{n\delta} + \frac{1}{\sqrt{n^2\delta^2 + 1}} 
 \;\lesssim\; \log \frac{1}{n\delta}\,.
\end{aligned}
\end{equation}
We have now proved the desired estimates for $\mathfrak{Q}_{n-1/2}(1+\delta^2)$.

Next we consider
$$\mathfrak{R}_n(\chi) \;=\; \frac{2n-1}{2+\delta^2}\left(\delta^2\mathfrak{Q}_{n-1/2}(\chi)+\mathfrak{Q}_{n-1/2}(\chi)-\mathfrak{Q}_{n-3/2}(\chi) \right).$$
Since $2n-1 \;\approx\; n$ and $2+\delta^2\geq 1$, it suffices to show that the three estimates in the lemma are satisfied~by
$$\frac{n\delta^2}{2+\delta^2}\,\mathfrak{Q}_{n-1/2}(\chi) \quad \text{and} \quad n \left(\mathfrak{Q}_{n-1/2}(\chi)-\mathfrak{Q}_{n-3/2}(\chi) \right).$$

Using \eqref{eq:q1}, we obtain that for $\delta> \cone$,
\begin{equation*}
 \frac{n\delta^2}{2+\delta^2}\,\mathfrak{Q}_{n-1/2}(\chi)  \;\lesssim\; n \cdot \frac{1}{n^3\delta^3} \;=\; \frac{1}{n^2\delta^3}.
\end{equation*}
Using \eqref{eq:q22}, we obtain that for $\delta>0$,
\begin{equation*}
 \frac{n\delta^2}{2+\delta^2}\mathfrak{Q}_{n-1/2}(\chi)  \;\leq\;   \frac{n\delta^2}{2+\delta^2} (\frac{1}{n^3\delta^2} + \frac{1}{n^3\delta^3})
\;\lesssim\; \frac{1}{n^2\delta^2} + \frac{1}{n^2\delta}.
\end{equation*}
Thus for $\delta < 2$,
\begin{equation*}
\frac{n\delta^2}{2+\delta^2}\mathfrak{Q}_{n-1/2}(\chi) \;\lesssim\; \frac{1}{n^2\delta^2}\,.
\end{equation*}
Using \eqref{eq:q3}, we obtain that for $\delta < \frac{1}{2n}$,
\begin{equation*}
\frac{n\delta^2}{2+\delta^2}\mathfrak{Q}_{n-1/2}(\chi) \;=\; \frac{n^2\delta^2}{(2+\delta^2)n}\mathfrak{Q}_{n-1/2}(\chi) \;\leq\; \frac{1}{8n}\mathfrak{Q}_{n-1/2}(\chi) \;\lesssim\; \log \frac{1}{n\delta}\,.
\end{equation*}

Therefore we are left to consider $ n \left(\mathfrak{Q}_{n-1/2}(\chi)-\mathfrak{Q}_{n-3/2}(\chi) \right)$.
From \eqref{eq:q1}, for $\delta \geq \cone$,
\begin{equation*}
n \left|\mathfrak{Q}_{n-1/2}(\chi)-\mathfrak{Q}_{n-3/2}(\chi) \right| \;\leq\; n\cdot\frac{1}{n^3\delta^3} \;\lesssim\; \frac{1}{n^2\delta^3}.
\end{equation*}
For the other two bounds, we use the explicit expression
\begin{multline*}
2\sqrt{2}n(\mathfrak{Q}_{n-1/2}-\mathfrak{Q}_{n-3/2})(1+\delta^2) = n\int_{-\pi}^\pi \frac{e^{in\theta}(1-e^{-i\theta}) \, d\theta}{\sqrt{\delta^2 + (1-\cos(\theta))}} \\ = -\int_{-\pi}^\pi \frac{e^{i(n-1)\theta} \, d\theta}{\sqrt{\delta^2 + (1-\cos(\theta))}} - \frac{i}{2}\int_{-\pi}^\pi \frac{e^{in\theta}(1-e^{-i\theta}) \sin(\theta) \, d\theta}{(\delta^2 + (1-\cos(\theta)))^{3/2}} \\ 
= - \int_{-\pi}^\pi \frac{\cos((n-1)\theta) \, d\theta}{\sqrt{\delta^2 + (1-\cos(\theta))}} + \frac{1}{2}\int_{-\pi}^\pi \frac{\cos(n\theta) \sin^2(\theta) \, d\theta}{(\delta^2 + (1-\cos(\theta)))^{3/2}} + \frac{1}{2}\int_{-\pi}^\pi \frac{\sin(n\theta)(1-\cos(\theta)) \sin(\theta) \, d\theta}{(\delta^2 + (1-\cos(\theta)))^{3/2}}\,.
\end{multline*}
The first term on the last line coincides with $2\sqrt{2}\mathfrak{Q}_{n-3/2}$, which we have already handled. For the second term,
\begin{multline*}
\int_{-\pi}^\pi \frac{\cos(n\theta)\sin^2(\theta) \, d\theta}{(\delta^2 + (1-\cos(\theta)))^{3/2}} \;=\; \frac{1}{n}\int_{-\pi n}^{\pi n} \frac{\cos(y) \sin^2(y/n) \, dy}{(\delta^2 + (1-\cos(y/n)))^{3/2}} \\
=\; - \frac{2}{n^2}\int_{-\pi n}^{\pi n} \frac{\sin(y) \cos(y/n) \sin(y/n) \, dy}{(\delta^2 + (1-\cos(y/n)))^{3/2}} + \frac{3}{2}\frac{1}{n^2}\int_{-\pi n}^{\pi n} \frac{\sin(y)\sin^3(y/n) \, dy}{(\delta^2 + (1-\cos(y/n)))^{5/2}}.
\end{multline*}
Arguing as before, for $n\delta < 1/2$,
$$
\left|\int_{-\pi}^\pi \frac{\cos(n\theta)\sin^2(\theta) \, d\theta}{(\delta^2 + (1-\cos(\theta)))^{3/2}}\right| \;\lesssim\; \int_{-1}^1 \frac{y^2 }{((n\delta)^2 + y^2 )^{3/2}} + \frac{y^4}{((n\delta)^2 + y^2)^{5/2}} \, dy + \int_{|y| > 1} \frac{1}{y^2} \, dy \;\approx\; \log \frac{1}{n\delta}.
$$
For $\delta> 0$, integrating by parts one more time yields
\begin{align*}
\left|\int_{-\pi}^\pi \frac{\cos(n\theta)\sin^2(\theta) \, d\theta}{(\delta^2 + (1-\cos(\theta)))^{3/2}}\right| &\;\lesssim\;  \int_{-\infty}^{\infty} \frac{ dy}{((n\delta)^2 + y^2)^{3/2}}+  \int_{-\infty}^{\infty} \frac{y^2 dy}{((n\delta)^2 + y^2)^{5/2}} +  \int_{-\infty}^{\infty} \frac{y^4 dy}{((n\delta)^2 + y^2)^{7/2}} \\ &\;\approx\; \frac{1}{(n\delta)^2}.
\end{align*}
The final term is dealt with in an identical manner. 
\end{proof}

\subsection{Proof of Lemmas~\ref{lem:essest} and \ref{lem:localest}} \label{app:localest}

With the estimates of Lemma~\ref{lem:Qasymp} in hand, we can now provide a proof of Lemma~\ref{lem:essest}.
\begin{proof}[Proof of Lemma~\ref{lem:essest}]
	Let $W_\alpha$ be the infinite straight cone obtained as the surface of revolution of the half-line given by $w_\alpha(t) = (\sin(\alpha) t, \cos (\alpha) t)$, $t > 0$. As in Section~\ref{sec:background}, for $n \in \mathbb{Z}$, let $K^n_{w_\alpha}$ denote the corresponding modal operator with kernel 
	$$K^n_{w_\alpha}(t,t') = \frac{1}{2\pi}\int_0^{2\pi} e^{-i n\theta} K_{W_\alpha}(t,\theta, t',0) \, d\theta, \quad t, t' > 0.$$
	Then the holomorphic function
	$\Pi^n_\alpha(z)$ of \eqref{eq:Pidef} is a Mellin transform,
	\begin{equation} \label{eq:mellin}
	\Pi^n_\alpha(z) = \sin \alpha \int_0^\infty t^z K_{w_\alpha}^n(t, 1) \, \frac{dt}{t}, \qquad 0 < \mre z < 3,
	\end{equation}
	see \cite[Section~3]{HelsingPerfekt2018a}. In particular, $t^{\beta-1} K^n_{w_\alpha}(t,1) \in L^1((0,\infty))$ for $0 < \beta < 3$.
	
	If $0 < \alpha < \pi/2$, then the convexity of $W_\alpha$ ensures that $K_{W_\alpha}$ is a non-negative kernel, cf. \eqref{eq:kerK}. In turn, $K^0_{w_\alpha}$ is non-negative, and therefore, by \eqref{eq:mellin},
	$$|\sigma_{0, \alpha}| = \sup_{\xi \in \mathbb{R}} \left|\Pi^0_\alpha(3/2+i\xi)\right| = \Pi^0_\alpha(3/2).$$
	Furthermore, for any $\xi \in \mathbb{R}$ and $n \in \mathbb{Z}$,
	$$ |\Pi^n_\alpha(3/2 + i\xi)|  \leq \sin \alpha \int_0^\infty t^{3/2} |K_{w_\alpha}^n(t, 1)| \, \frac{dt}{t} \leq \Pi^0_\alpha(3/2),$$
	so that $|\sigma_{n, \alpha}| \leq |\sigma_{0, \alpha}|$. On the other hand, the non-negativity of $\sigma_{\ess}(K_\Gamma, \mathcal{E})$ and the continuity of $\Pi^n_\alpha$ guarantees that
	$$\sigma_{\ess}(K_\gamma^n, \mathcal{E}_n) = [0, |\sigma_{n, \alpha}|].$$
	We conclude that
	$$\sigma_{\ess}(K_\Gamma, \mathcal{E}) = [0, |\sigma_{0, \alpha}|].$$
	A similar argument applies when $\pi/2 < \alpha < \pi$.
	
	The parity identity $P^{|n|}_z = P^{|n|}_{-z-1}$ for associated Legendre functions of the first kind implies that $\Pi^n_\alpha(z) = \Pi^n_\alpha(3-z)$, and in particular that
	$$|\tilde{\sigma}_{n, \alpha}| = \sup_{\xi \in \mathbb{R}} \left|\Pi^n_\alpha(1+i\xi)\right| = \sup_{\xi \in \mathbb{R}} \left|\Pi^n_\alpha(2+i\xi)\right|.$$ 
	By the Hadamard three lines theorem we thus have that
	$$|\sigma_{n, \alpha}| \leq |\tilde{\sigma}_{n, \alpha}|.$$
	
	Finally, in the special case of a straight cone, formula \eqref{eq:modalformula} takes for $n \geq 0$ the form
	$$K^n_{w_\alpha}(t, 1) = 	\frac{\cos \alpha}{2\sqrt{2\pi^3} \sin^2 \alpha} \frac{\mathfrak{Q}_{n-1/2}(\chi)+\mathfrak{R}_n(\chi)}{t^{3/2}},$$
	where $\chi = \chi(t) = 1 + \frac{(t - 1)^2}{2 t \sin^2 \alpha}$. Let $$\mathfrak{P}_n(\chi) = (\mathfrak{Q}_{n-1/2} + |\mathfrak{R}_{n}|)(\chi).$$
	Suppose that $c_0 \in (0, \pi/2)$ is given and that $|\alpha - \pi /2| < c_0$. Then $\sin \alpha$ is uniformly bounded from below (depending on $c_0$) and to prove \eqref{eq:bdess} it is sufficient to show that
	$$\int_0^\infty t^{-\frac{1}{2}} \mathfrak{P}_n(\chi(t)) \, \frac{dt}{t} \lesssim \frac{1}{n}, \qquad n \geq 1.$$	
	Making the change of variable $s^2 = \frac{(t - 1)^2}{2 t \sin^2 \alpha}$ for $t < 1$ and for $t > 1$, we find that it is equivalent to show that
	$$\int_0^\infty \mathfrak{P}_n(1 + s^2) \, ds \lesssim \frac{1}{n}.$$	
	By Lemma~\ref{lem:Qasymp},
	$$\int_1^\infty \mathfrak{P}_n(1 + s^2) \, ds \lesssim \frac{1}{n^2}\int_{1}^\infty \frac{ds}{s^3} \lesssim \frac{1}{n^2},$$
	$$ \int_{1/(2n)}^1 \mathfrak{P}_n(1+s^2) \, ds \lesssim \frac{1}{n^2}\int_{1/(2n)}^1 \frac{1}{s^2} \, ds  \approx \frac{1}{n},$$	
	and
	$$ \int_{0}^{1/(2n)} \mathfrak{P}_n(1+s^2) \, ds \lesssim \int_{0}^{1/(2n)} \log \frac{1}{ns} \, ds  \approx \frac{1}{n}. $$
	The case when $n < 0$ is handled by symmetry.
\end{proof}

Lemma~\ref{lem:localest} follows from similar estimates, applying the Schur test.

\begin{proof}[Proof of Lemma~\ref{lem:localest}]
We prove the first inequality; the second inequality can be proved in a nearly identical fashion. We want to show that 
$$\int_0^1 \left| \int_0^1 \chi_1(t) \mathfrak{K}^n(t,t') g (t') d\mu_0(t') \right|^2 d\mu_0(t) \;\lesssim\;  \left( \frac{|\alpha - \pi/2| + \varepsilon}{n}\right)^2 \int_0^1| g(t)|^2 d\mu_0(t), \quad g\in L^2(d\mu_0),$$
uniformly for all $(\alpha,\varepsilon)$-perturbations $\Gamma$ of $\Gamma_0$.

Let 
$$M(t) \;=\; t\chi_{[0,\frac{1}{5}]}(t) + \chi_{[\frac{1}{5},\frac{4}{5}]}(t) + (1-t)\chi_{[\frac{4}{5},1]}(t).$$ 
It is equivalent to show that 
\begin{equation*}
\int_0^1 \left| \int_0^1  R(t,t') f (t') d\mu_1(t') \right|^2 d\mu_1(t) \;\lesssim\; \left( \frac{|\alpha - \pi/2| + \varepsilon}{n}\right)^2  \int_0^1|f(t)|^2 d\mu_1(t), \quad f \in L^2(d\mu_1),
\end{equation*}
where
\begin{align*}
d\mu_1(t) &= \big(M(t)\big)^{-2} d\mu_0(t), \\
 R(t,t') &=  \chi_1(t) \mathfrak{K}^n(t,t') M(t) M(t').
 \end{align*}
By the Schur test, it suffices to verify that
\begin{equation*}
\sup_{t\in[0,1]}\int_0^1 |R(t,t')| d\mu_1(t') \;\lesssim\;  \frac{|\alpha - \pi/2| + \varepsilon}{n}, \quad \sup_{t'\in[0,1]} \int_0^1 |R(t,t')| d\mu_1(t) \;\lesssim\;  \frac{|\alpha - \pi/2| + \varepsilon}{n}.
\end{equation*}
Recalling from \eqref{eq:kernelKS} that
\begin{equation*}
\mathfrak{K}^n (t,t') \;=\;  K_{\gamma}^n(t,t') \frac{\gamma_1(t')|\gamma'(t')|} {\gamma_{0,1}(t')|\gamma_0'(t')|}  - K_{\gamma_0}^n(t,t'),
\end{equation*}
we consider
$$
R_1(t,t') \;=\; \chi_1(t) K_{\gamma}^n(t,t') \frac{\gamma_1(t')|\gamma'(t')|} {\gamma_{0,1}(t')|\gamma_0'(t')|} M(t) M(t').
$$
The other term can be handled similarly.
Let $\hat\chi_1 + \hat\chi_2 =1$ be a partition of unity of $[0,1]$ such that $\mathrm{supp}\,\hat\chi_1\subset[0,\frac{1}{5}]$ and $\mathrm{supp}\,\hat\chi_2\subset[\frac{1}{6},1]$.
It suffices to show that
\begin{equation}\label{eq:4ints}
\begin{aligned}
\sup_{t\in[0,4\varepsilon]}\int_0^1 |R_1(t,t') \hat\chi_1(t')| d\mu_1(t') \;\lesssim\;  \frac{|\alpha - \pi/2| + \varepsilon}{n},\\
\sup_{t'\in[0,\frac{1}{5}]} \int_0^1 |R_1(t,t') \hat\chi_1(t')| d\mu_1(t) \;\lesssim\;  \frac{|\alpha - \pi/2| + \varepsilon}{n},\\
\sup_{t\in[0,4\varepsilon]}\int_0^1 |R_1(t,t') \hat\chi_2(t')| d\mu_1(t') \;\lesssim\;  \frac{|\alpha - \pi/2| + \varepsilon}{n},\\
\sup_{t'\in[\frac{1}{6},1]} \int_0^1 |R_1(t,t') \hat\chi_2(t')| d\mu_1(t) \;\lesssim\;  \frac{|\alpha - \pi/2| + \varepsilon}{n}\,.
\end{aligned}
\end{equation}

By Definition~\ref{def:pert}, we have that
\begin{equation*}
 \frac{\gamma_1(t')|\gamma'(t')|} {\gamma_{0,1}(t')|\gamma_0'(t')|} \approx 1, \quad t \in [0,1].
\end{equation*}
Furthermore, there is a universal constant $C_1$ such that
$$|\gamma(t)-\gamma(t')||K_{\Gamma}(t,0,t',0)| < C_1, \quad t, t' \in [0,1].$$
The assumptions of Definition~\ref{def:pert} ensure that $|\gamma(t) - \gamma'(t)| \approx |t-t'|$. The estimate is thus a consequence of the uniform bounds on $|\gamma|$, $|\gamma'|$ and the bound on the curvature of the generating curve $\gamma$ of $\Gamma$. Also note that
$$
d\mu_1(t) \;\approx\; dt/t \;\text{ on }\, [0,\textstyle\frac{1}{5}], \quad  d\mu_1(t) \;\approx\; dt/(1-t) \;\text{ on }\, [\textstyle\frac{1}{6},1].
$$

Applying these estimates together with \eqref{eq:modalformula} yields that the four integrals in \eqref{eq:4ints} can be bounded as follows:
\begin{equation}\label{eq:R11}
\int_0^1 |R_1(t,t') \hat\chi_1(t')| d\mu_1(t') \;\lesssim\; 
\chi_1(t) \int_0^{\frac{1}{5}} \sqrt{\frac{t'}{ t}} (|\cot\alpha| + \epsilon) \left(\mathfrak{Q}_{n-1/2}(\chi)+  |\mathfrak{R}_n(\chi)|\right) \, \frac{dt'}{t'}\,,
\end{equation}
\begin{equation}\label{eq:R12}
\int_0^1 |R_1(t,t') \hat\chi_1(t')| d\mu_1(t) \;\lesssim\; 
\hat\chi_1(t') \int_0^{4\varepsilon} \sqrt{\frac{t'}{ t}} (|\cot\alpha| + \epsilon) \left(\mathfrak{Q}_{n-1/2}(\chi)+  |\mathfrak{R}_n(\chi)|\right) \, \frac{dt}{t},
\end{equation}
\begin{equation}\label{eq:R13}
\int_0^1 |R_1(t,t') \hat\chi_2(t')| d\mu_1(t') \;\lesssim\; 
\chi_1(t) \int_{\frac{1}{6}}^1 \frac{\sqrt{1-t'}}{\sqrt{ t}} (|\cot\alpha| + \epsilon) \left(\mathfrak{Q}_{n-1/2}(\chi)+  |\mathfrak{R}_n(\chi)|\right) \, \frac{dt'}{1-t'},
\end{equation}
\begin{equation}\label{eq:R14}
\int_0^1 |R_1(t,t') \hat\chi_2(t')| d\mu_1(t) \;\lesssim\; 
\hat\chi_2(t')\int_0^{4\varepsilon} \frac{\sqrt{1-t'}}{\sqrt{ t}}  (|\cot\alpha| + \epsilon) \left(\mathfrak{Q}_{n-1/2}(\chi)+  |\mathfrak{R}_n(\chi)|\right) \, \frac{dt}{t},
\end{equation}
where
$$\chi \;=\; 1+ \frac{|\gamma(t)-\gamma(t')|^2}{2\gamma_1(t)\gamma_1(t')}.$$

We first consider \eqref{eq:R11}. Note that there is a universal constant $C_1 > 1$ such that
\begin{equation*}
 \frac{|t-t'|^2}{2 t t'}\;\leq\; \frac{|\gamma(t)-\gamma(t')|^2}{2\gamma_1(t)\gamma_1(t')}\;\leq\;  C_1\frac{|t-t'|^2}{2 t t'}
\end{equation*}
whenever $t\in \mathrm{supp}\, \chi_1 \subset [0,4\varepsilon]$ and $t' \in \mathrm{supp}\,\hat\chi_1\subset[0,\frac{1}{5}]$. For $n \geq 1$, let
$$\mathfrak{H}_n(1+\delta^2) =  \sup_{1 \leq \kappa \leq C_1 }\left(\mathfrak{Q}_{n-1/2}(1+\kappa\delta^2)+  |\mathfrak{R}_n(1+\kappa\delta^2)|\right), \quad \delta > 0,$$
so that
$$\mathfrak{Q}_{n-1/2}(\chi(t))+  |\mathfrak{R}_n(\chi(t))| \leq \mathfrak{H}_n(1+s^2), \quad t\in \mathrm{supp}\, \chi_1, \, t' \in \mathrm{supp}\,\hat\chi_1,$$
where 
\begin{equation} \label{eq:schange}
s^2 = \frac{|t-t'|^2}{2 t t'} = \frac{|(t'/t)-1|^2}{2t'/t}.
\end{equation}
For fixed $t$, we understand \eqref{eq:schange} as two changes of variable from $\tilde{t} = t'/t$ to $s^2$, one for $\tilde{t} > 1$ and one for $\tilde{t} < 1$,
\begin{equation*}
ds \;=\; \pm \frac{ t'+ t }{\sqrt{8tt'}} \, \frac{dt'}{t'} \;=\; \pm \frac{ \tilde{t}+ 1 }{\sqrt{8\tilde{t}}} \, \frac{d\tilde{t}}{\tilde{t}}.
\end{equation*}
Making the changes of variable in \eqref{eq:R11} yields, for $t \in \mathrm{supp}\, \chi_1$,
$$\int_0^1 |R_1(t,t') \hat\chi_1(t')| d\mu_1(t')  \;\lesssim\;  (|\cot\alpha| + \epsilon) \int_0^\infty \mathfrak{H}_n(1+s^2) \, ds.$$
Note that Lemma~\ref{lem:Qasymp} immediately implies that $\mathfrak{H}_n$ satisfies the same estimates as $\mathfrak{Q}_{n-1/2}$. Hence the same argument as in the proof of Lemma~\ref{lem:essest} shows that $\int_0^\infty \mathfrak{H}_n(1+s^2) \, ds \lesssim 1/n$. The integral of \eqref{eq:R12} is treated in the same way.

Finally we consider \eqref{eq:R13} and \eqref{eq:R14}.
In this case $t\in \mathrm{supp}\, \chi_1 \subset [0,4\varepsilon]$ and $t' \in \mathrm{supp}\,\tilde\chi_1\subset[\frac{1}{6},1]$, and therefore 
$$
\frac{|\gamma(t)-\gamma(t')|^2}{2\gamma_1(t)\gamma_1(t')} \;\gtrsim\; \frac{1}{t (1-t')}\,.
$$
For \eqref{eq:R13}, we find from Lemma~\ref{lem:Qasymp} that
\begin{equation*}
\sup_{t\in[0,4\varepsilon]} \int_0^1 |R_1(t,t') \hat\chi_2(t')| d\mu_1(t')  \;\lesssim\; 
\frac{|\cot\alpha| + \epsilon}{n^2} \sup_{t\in[0,4\varepsilon]} t \int_{\frac{1}{6}}^1 (1-t') \, dt'
  \;\lesssim\; \frac{\varepsilon  (|\cot\alpha| + \epsilon)}{n^2},
\end{equation*}
and for \eqref{eq:R14}, 
\begin{equation*}
\sup_{t'\in[\frac{1}{6},1]} \int_0^2 |R_1(t,t') \hat\chi_2(t')| d\mu_1(t) \;\lesssim\;
\frac{|\cot\alpha| + \epsilon}{n^2} \int_0^{4\varepsilon} \, dt
\;\lesssim\;  \frac{\varepsilon(|\cot\alpha| + \epsilon)}{n^2}. \qedhere
\end{equation*}
\end{proof}


\subsection{Proof of Lemma~\ref{lem:Schatten}} \label{app:Schatten}
\begin{proof}
It suffices to prove that $\chi_2 \mathfrak{K} \tilde{\chi}_1,\chi_2 \mathfrak{S} \tilde{\chi}_1 \in H^{\mu}(\Gamma_0 \times \Gamma_0)$ for some $\mu>1$. Because then, by \cite[Theorem 1]{DelgadoRuzhansky2014}, we have that $\chi_2 \mathfrak{K} \tilde{\chi}_1 \in S_p(L^2(\mathbb S^2))$ for every  $p> \frac{2n}{n+2\mu}$, where $n=2$ is the dimension of $\Gamma_0 = \mathbb S^2$.  

Since $\chi_2$ and $\tilde{\chi}_1$ are supported away from the south pole and the north pole, respectively, we switch to an Euclidean coordinate chart $(x,y)$ for points in the support of $\chi_2$ and a chart $(u,v)$ for points in the support of $\tilde{\chi}_1$. More precisely, we let $(x,y)$ be given by stereographic projection from the south pole, and $(u,v)$ by projection onto the horizontal plane. 

As the supports of $\chi_2$ and $\tilde{\chi}_1$ do not overlap, the functions $$\chi_2(t(x,y)) K_{\Gamma_0}(\br_0(x,y), \br_0(u,v)) \tilde{\chi}_1(t(u,v)) \quad \textrm{and} \quad \chi_2(t(x,y))S_{\Gamma_0}(\br_0(x,y),\br_0(u,v)) \tilde{\chi}_1(t(u,v))$$ are then smooth on $\mathbb{R}^4$. It therefore suffices to show that the following two functions are in $H^{\mu}(\mathbb R^4)$ for some $\mu>1$:
\begin{equation*}
\begin{aligned}
\chi_2(t(x,y)) K_{\Gamma}(\br(x,y),\br(u,v))  \frac{ \gamma_1(t(u,v)) |\gamma'(t(u,v))|} { \gamma_{0,1}(t(u,v)) |\gamma_0'(t(u,v))|}  \tilde{\chi}_1(t(u,v)),\\
\chi_2(t(x,y)) S_{\Gamma}(\br(x,y),\br(u,v))  \frac{ \gamma_1(t(u,v)) |\gamma'(t(u,v))|}  {\gamma_{0,1}(t(u,v)) |\gamma_0'(t(u,v))|}  \tilde{\chi}_1(t(u,v)).
\end{aligned}
\end{equation*}
Using that $\gamma(t) = (t,\gamma_2(t))$ and $\gamma_0(t) = (t,\gamma_{0,2}(t))$ for $t \in \mathrm{supp}\,{\tilde{\chi}_1} \subset [0,2\varepsilon]$ these two functions can be rewritten as
\begin{equation}\label{eq:estK}
\begin{aligned}
\frac{1}{2\pi} \chi_2(t(x,y))  \frac{\langle \br(x,y) - \br(u,v), \bnu_{\br(x,y)} \rangle}{|\br(x,y) - \br(u,v)|^3}  \frac{ \sqrt{ 1 + \gamma_2'(\sqrt{u^2+v^2})^2}} { \sqrt{ 1 + \gamma_{0,2}'(\sqrt{u^2+v^2})^2}}  \tilde{\chi}_1(\sqrt{u^2+v^2}),\\
\frac{1}{2\pi}  \chi_2(t(x,y)) \frac{1}{|\br(x,y) - \br(u,v)|} \frac{ \sqrt{ 1 + \gamma_2'(\sqrt{u^2+v^2})^2}} { \sqrt{ 1 + \gamma_{0,2}'(\sqrt{u^2+v^2})^2}} \tilde{\chi}_1(\sqrt{u^2+v^2}).
\end{aligned}
\end{equation}
Away from $(u,v)=(0,0)$, both expressions are smooth in all variables.
In a neighborhood of $(u,v) = (0,0)$, $ \br(u,v) = (u,v, \cot(\alpha) \sqrt{u^2+v^2}+ c)$, $\gamma_2'(\sqrt{u^2+v^2}) = \cot\alpha$, and $\gamma_{0,2}'(\sqrt{u^2+v^2} ) = \frac {\sqrt{u^2+v^2}}{\sqrt{1-u^2-v^2}}$.

Both functions in \eqref{eq:estK} are therefore of the form
	\begin{equation*}
	F(x,y, u,v) = G(x,y, u,v, \sqrt{u^2+v^2}),
	\end{equation*}
	where 
	$G(x,y,u,v,w)$ is a smooth compactly supported function in $\mathbb R^5$. Let 
	$$G(x,y,u,v,w) = G_0(x,y,u,v) + G_1(x,y,u,v)w + G_2(x,y,u,v)w^2 + G_3(x,y,u,v, w)$$
	be the Taylor expansion of $G$ around $w = 0$, where $G_0, \ldots, G_3$ are smooth and 
	$$|\partial_{x,y,u,v}^\alpha \partial_w^j G_3(x,y,u,v,w)| \leq C_{\alpha, j} |w|^{3-j}$$ for $j=0,1,2$, any multi-index $\alpha \in \mathbb{N}_0^4$, and some corresponding constants $C_{\alpha, j}$.
 It is then clear that 
 $$F(x,y,u,v) - G_1(x,y,u,v)\sqrt{u^2+v^2} \in H^2(\mathbb{R}^4).$$
 
	To finish the proof, we therefore only need to check that $\tilde{G}(x,y,u,v) \sqrt{u^2+v^2} \in H^\mu( \mathbb R^4)$ for $\mu < 2$, where $\tilde{G}$ is a smooth compactly supported function in $\mathbb{R}^4$. By comparing this function with $\tilde{G}(x,y,u,v)(1-e^{-\sqrt{u^2+v^2}})$, it is enough to note that $e^{-\sqrt{u^2+v^2}} \in H^{\mu}(\mathbb{R}^2)$ for $\mu < 2$, as can be seen from an explicit computation of its Fourier transform.
\end{proof}

\bigskip
\noindent{\bfseries Acknowledgement.}
We are grateful to Lucas Chesnel for pointing out the relevance of \cite[Theorem~7.5]{Bonnet-BenDhia2012}, and to Titus Hilberdink for helpful discussions around Lemma~\ref{lem:Qasymp}.
This material is based upon work supported by the National Science Foundation under Grant No. DMS-1814902 (S.P. Shipman). The research of K.-M. Perfekt was supported by grant EP/S029486/1 of the UK Engineering and Physical Sciences Research Council.

\bibliographystyle{amsplain}


\begin{thebibliography}{10}

\bibitem{BonnetierZhang2017}
Eric Bonnetier and Hai Zhang, \emph{Characterization of the essential spectrum
  of the {N}eumann-{P}oincar\'{e} operator in 2{D} domains with corner via
  {W}eyl sequences}, Rev. Mat. Iberoam. \textbf{35} (2019), no.~3, 925--948.

\bibitem{LCP21}
Marta de~Le\'on-Contreras and Karl-Mikael Perfekt, \emph{The quasi-static
  plasmonic problem for polyhedra}, arXiv:2103.13071 (2021).

\bibitem{DelgadoRuzhansky2014}
Julio Delgado and Michael Ruzhansky, \emph{Schatten classes on compact
  manifolds: kernel conditions}, J. Funct. Anal. \textbf{267} (2014), no.~3,
  772--798.

\bibitem{Bonnet-BenDhia2012}
Anne-Sophie Bonnet-Ben Dhia, Lucas Chesnel, and Patrick~Ciarlet Jr.,
  \emph{T-coercivity for scalar interface problems between dielectrics and
  metamaterials}, ESAIM: Mathematical Modelling and Numerical Analysis
  \textbf{46} (2012), no.~6, 1363--1387.

\bibitem{Bonnet-BenHazardMonteghett2020}
Anne-Sophie Bonnet-Ben Dhia, Christophe Hazard, and Florian Monteghetti,
  \emph{Complex-scaling method for the plasmonic resonances of planar
  subwavelength particles with corners}, preprint, hal-02923259 archives
  ouvertes (2020).

\bibitem{HelsingKangLim2016}
Johan Helsing, Hyeonbae Kang, and Mikyoung Lim, \emph{Classification of spectra
  of the {N}eumann-{P}oincar{\'e} operator on planar domains with corners by
  resonance}, Annales de l'Institut Henri Poincare (C) Nonlinear Analysis
  \textbf{34} (2017), no.~4, 991--1011.

\bibitem{HelsingPerfekt2018a}
Johan Helsing and Karl-Mikael Perfekt, \emph{The spectra of harmonic layer
  potential operators on domains with rotationally symmetric conical points},
  J. Math. Pures Appl. (9) \textbf{118} (2018), 235--287.

\bibitem{KangLimYu2017}
Hyeonbae Kang, Mikyoung Lim, and Sanghyeon Yu, \emph{Spectral resolution of the
  {N}eumann-{P}oincar{\'e} operator on intersecting disks and analysis of
  plasmon resonance}, Archive for Rational Mechanics and Analysis \textbf{226}
  (2017), no.~1, 83--115.

\bibitem{KhavinsonPutinarShapiro2007}
Dmitry Khavinson, Mihai Putinar, and Harold~S. Shapiro, \emph{Poincar{\'e}'s
  variational problem in potential theory}, Archive for Rational Mechanics and
  Analysis \textbf{185} (2007), no.~1, 143--184.

\bibitem{LiShipman2019}
Wei Li and Stephen~P. Shipman, \emph{Embedded eigenvalues for the
  {N}eumann-{P}oincar{\'e} operator}, J. Integral Equations and Appl.
  \textbf{31} (2019), no.~4, 505--534.

\bibitem{Med97}
Dagmar Medkov\'{a}, \emph{The third boundary value problem in potential theory
  for domains with a piecewise smooth boundary}, Czechoslovak Math. J.
  \textbf{47(122)} (1997), no.~4, 651--679.

\bibitem{Perfekt2020}
Karl-Mikael Perfekt, \emph{Plasmonic eigenvalue problem for corners: Limiting
  absorption principle and absolute continuity in the essential spectrum}, J.
  Math. Pures Appl. (9) (2020), in press.

\bibitem{PerfektPutinar2014}
Karl-Mikael Perfekt and Mihai Putinar, \emph{Spectral bounds for the
  {Neumann-Poincar{\'e}} operator on planar domains with corners}, Journal
  d'Analyse Math{\'e}matique \textbf{124} (2014), no.~1, 39--57.

\bibitem{PerfektPutinar2017}
\bysame, \emph{The essential spectrum of the {Neumann-Poincar{\'e}} operator on
  a domain with corners}, Archive for Rational Mechanics and Analysis
  \textbf{223} (2017), no.~2, 1019--1033.

\end{thebibliography}

\providecommand{\bysame}{\leavevmode\hbox to3em{\hrulefill}\thinspace}
\providecommand{\MR}{\relax\ifhmode\unskip\space\fi MR }
\providecommand{\MRhref}[2]{%
  \href{http://www.ams.org/mathscinet-getitem?mr=#1}{#2}
}
\providecommand{\href}[2]{#2}

\end{document}